\newtheorem{theorem}{Theorem}
\newtheorem{corollary}[theorem]{Corollary}
\newtheorem{lemma}[theorem]{Lemma}
\newtheorem{remark}[theorem]{Remark}
\newenvironment{proof}[1][Proof]{\noindent\textbf{#1.} }{\ \rule{0.5em}{0.5em}}
\begin{document}

\title{Novel Formulas for $B$-Splines, Bernstein Basis Functions and special numbers: Approach to Derivative and Functional Equations of Generating Functions}
\author{Yilmaz Simsek\\
Department of Mathematics, Faculty of Science, \\ University of Akdeniz, Antalya TR-07058, Turkey; \\ ysimsek@akdeniz.edu.tr}
\maketitle

\begin{abstract}
One of the main purposes of this article is to give functional
equations and differential equations between Bernstein basis functions and
generating functions of $B$-spline curves. Using these equations, very
useful formulas containing the relationships among the uniform $B$-spline curves,
the Bernstein basis functions, and other special numbers and polynomials are
derived. By applying $p$-adic integrals to these polynomials, many novel
formulas are also derived. Furthermore, by applying the partial differential
derivative operator and Laplace transformation to these generating
functions, with aid of higher derivative differential, not only recurrence
relations and the higher derivative formula for $B$-spline curves, but also
infinite series representations are given.
\end{abstract}

\section{Introduction}

Spline theory has been among the most popular areas of mathematics and other
applied sciences in recent years. Isaac Jacob Schoenberg \cite{Schonberg}, a
Romanian-American mathematician, is credited with the invention of splines
and is also known as their father. The best known of these is $B$-spline.
Generally, $B$-splines and the Bezier curves expressed in terms of the
Bernstein basis functions are also known to be used for curve fitting and
numerical differentiation of experimental data. Moreover, they are often
used effectively in computer-aided design and computer graphics (cf. \cite{Goldman,GSS,Farouki,lopez,amc,Salomon}; see
also the references cited in each of these earlier works). The generating
function for the $B$-spline was firstly constructed by Goldman \cite{Goldman}. The motivation of this article is to
give new formulas and finite sums that include $B$-splines and special
polynomials by blending the generating functions with their functional
equations for the Bernstein basis functions, the $B$-spline other special
numbers and polynomials involving special functions, the Apostol type
Bernoulli numbers and polynomials, the Apostol type Euler numbers and
polynomials, the Eulerian numbers and polynomials, the Stirling numbers.

The remaining parts of this article the following notations are used: 
Let $\mathbb{N}$, $\mathbb{Z}$, $\mathbb{Q}$, $\mathbb{R}$ and $\mathbb{C}$
demonstrate the set of natural numbers, the set of integers, the set of
rational numbers, the set of real numbers and the set of complex numbers,
respectively. $\mathbb{N}_{0}=\mathbb{N\cup }\left\{ 0\right\} $. Let $%
\mathbb{C}_{p}$ be the field of $p$-adic completion of algebraic closure of $%
\mathbb{Q}_{p}$, set of $p$-adic rational numbers. Let $\mathbb{Z}_{p}$ be
set of $p$-adic integers.

The remaining parts of this article are also briefly summarized as follows:

In preliminaries section, we give generating functions with their some
properties of special numbers and polynomials.

In Section \ref{section2}, by using generating functions with their functional equations
and using $p$-adic integrals, we give some novel computation formulas of the
Apostol-Bernoulli polynomials, the Apostol-Euler polynomials, the Eulerian
numbers and polynomials, and the Euler Frobenius numbers and polynomials.

In Section \ref{section3}, with the aid of generating functions with their derivative and functional equations, we give many novel identities and relations for
the uniform $B$-splines and the Bernstein basis functions. With the aid of
application of Laplace transform to generating functions for the uniform $B$%
-splines, series representations of the uniform $B$-spline and the Bernstein
basis functions are given.

Finally, this article is concluded with the conclusion section.
\subsection{Preliminaries} \label{Preliminaries}

The Stirling numbers of the second kind are defined by%
\begin{equation}
	\left( e^{t}-1\right) ^{c}=\sum_{n=0}^{\infty }c!S_{2}(n,c)\frac{t^{n}}{n!},
	\label{Sn2}
\end{equation}%
where $c\in \mathbb{N}_{0}$ (cf. \cite{Comtet,SimsekMTJPAM,SimsekFPTA,SrivatavaChoi}).

The array polynomials are defined by%
\begin{equation}
	e^{t\omega}\left( e^{t}-1\right) ^{c}=\sum_{n=0}^{\infty }c!S_{c}^{n}(\omega)\frac{%
		t^{n}}{n!},  \label{arP}
\end{equation}%
where $c\in \mathbb{N}_{0}$ (cf. \cite{Comtet,SimsekMTJPAM,SimsekFPTA,SrivatavaChoi}).

The Apostol-Bernoulli numbers $\mathcal{B}_{n}(\rho )$ are defined by%
\begin{equation}
	K_{n}\left( t;\rho \right) =\frac{t}{\rho e^{t}-1}=\sum_{n=0}^{\infty }%
	\mathcal{B}_{n}(\rho )\frac{t^{n}}{n!}  \label{ApstlBernNumbr}
\end{equation}%
(cf. \cite{Apostol,Bayad,Boyadzhiev,SimsekMTJPAM,SimsekFPTA,SrivatavaChoi}).

\begin{lemma}
	Let $n\in \mathbb{N}_{0}$. Then we have%
	\begin{equation}
		\mathcal{B}_{n}(\rho )=\frac{n\rho }{\left( \rho -1\right) ^{n}}%
		\sum\limits_{s=0}^{n-1}\left( -1\right) ^{s}s!\rho ^{s-1}\left( \rho
		-1\right) ^{n-1-s}S_{2}\left( n-1,s\right)  \label{B1}
	\end{equation}
	\textup{(cf. \cite{Apostol})}.
\end{lemma}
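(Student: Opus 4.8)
The plan is to expand the generating function $K_{n}(t;\rho)=t/(\rho e^{t}-1)$ directly into a power series in $t$ whose coefficients are built from Stirling numbers of the second kind, and then read off $\mathcal{B}_{n}(\rho)$ by comparing with \eqref{ApstlBernNumbr}. The crucial algebraic observation is that the denominator splits as $\rho e^{t}-1=(\rho-1)+\rho(e^{t}-1)$, which separates the constant part $\rho-1$ from the part $\rho(e^{t}-1)$ that vanishes at $t=0$.

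First I would factor out $\rho-1$ (assuming $\rho\neq 1$) and write
\begin{equation*}
\frac{1}{\rho e^{t}-1}=\frac{1}{\rho-1}\cdot\frac{1}{1+\frac{\rho}{\rho-1}\left(e^{t}-1\right)}.
\end{equation*}
Since $e^{t}-1$ has no constant term, the second factor admits a geometric expansion that is convergent in the formal power series sense, giving
\begin{equation*}
\frac{1}{\rho e^{t}-1}=\frac{1}{\rho-1}\sum_{s=0}^{\infty}(-1)^{s}\frac{\rho^{s}}{(\rho-1)^{s}}\left(e^{t}-1\right)^{s}.
\end{equation*}
The point making this rigorous is that $\left(e^{t}-1\right)^{s}=O(t^{s})$, so only finitely many values of $s$ contribute to each power of $t$.

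Next I would substitute the defining relation \eqref{Sn2} for $\left(e^{t}-1\right)^{s}$ and multiply through by $t$, obtaining
\begin{equation*}
\frac{t}{\rho e^{t}-1}=\frac{1}{\rho-1}\sum_{s=0}^{\infty}(-1)^{s}\frac{\rho^{s}}{(\rho-1)^{s}}\,s!\sum_{m=0}^{\infty}S_{2}(m,s)\frac{t^{m+1}}{m!}.
\end{equation*}
Re-indexing with $n=m+1$ and using $t^{n}/(n-1)!=n\,t^{n}/n!$ lets me collect the coefficient of $t^{n}/n!$; since $S_{2}(n-1,s)=0$ for $s>n-1$, the inner sum truncates at $s=n-1$, which is exactly what turns the formula into a finite sum. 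Comparing with \eqref{ApstlBernNumbr} then yields
\begin{equation*}
\mathcal{B}_{n}(\rho)=\frac{n}{\rho-1}\sum_{s=0}^{n-1}(-1)^{s}\frac{\rho^{s}}{(\rho-1)^{s}}\,s!\,S_{2}(n-1,s).
\end{equation*}

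The final step is purely algebraic: I would rewrite the factor $\rho^{s}/(\rho-1)^{s+1}$ as $\rho\,\rho^{s-1}(\rho-1)^{n-1-s}/(\rho-1)^{n}$, which brings the expression into the exact shape displayed in \eqref{B1}. I do not expect a genuine obstacle here; the only points demanding care are the bookkeeping in the geometric expansion, the index shift that produces the overall factor $n$, and the observation that the vanishing of $S_{2}(n-1,s)$ for $s\ge n$ is what cuts the series down to a finite sum. The restriction $\rho\neq 1$ is needed throughout, since division by $(\rho-1)^{n}$ appears in the statement.
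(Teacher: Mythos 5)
Your argument is correct: the decomposition $\rho e^{t}-1=(\rho -1)+\rho \left( e^{t}-1\right) $, the formal geometric expansion (justified by $\left( e^{t}-1\right) ^{s}=O(t^{s})$), the substitution of \eqref{Sn2}, the index shift producing the factor $n$, and the final algebraic rewriting all check out and reproduce \eqref{B1} exactly. The paper itself offers no proof of this lemma (it only cites Apostol), but your expansion is precisely the $\omega =0$ case of the functional equation \eqref{fe} that the paper uses to derive its analogous results for $\mathcal{B}_{m}(\omega ;\rho )$, so your route is essentially the same technique the paper employs.
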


The Apostol-Bernoulli polynomials $\mathcal{B}_{n}(\omega ;\rho )$  are
defined by%
\begin{equation}
	K_{p}\left( \omega ,t;\rho \right) =K_{n}\left( t;\rho \right) e^{t\omega
	}=\sum_{n=0}^{\infty }\mathcal{B}_{n}(\omega ;\rho )\frac{t^{n}}{n!}
	\label{ABP}
\end{equation}%
where $\rho \neq 1$ (cf. \cite{Apostol}).

Substituting $\rho =1$ into (\ref{ABP}), we have the Bernoulli polynomials:%
\begin{equation*}
	B_{n}\left( \omega \right) =\mathcal{B}_{n}\left( \omega ;1\right) 
\end{equation*}
when $\omega=0$, we also have $B_{n}=B_{n}(0)$ (cf. \cite{Apostol,Kilar,Luo,luo1,luo2,SimsekMTJPAM}; see
also the references cited in each of these earlier works).

Using (\ref{ABP}) and (\ref{ApstlBernNumbr}), we have%
\begin{equation*}
	\alpha \mathcal{B}_{1}(1;\alpha )-\mathcal{B}_{1}(\alpha )=1
\end{equation*}%
and for $n\geq 2$,%
\begin{equation*}
	\alpha \mathcal{B}_{n}(1;\alpha )-\mathcal{B}_{n}(\alpha )=0,
\end{equation*}%
\begin{equation}
	\mathcal{B}_{n}(\omega;\alpha )=\sum\limits_{j=0}^{n}\binom{n}{j}\omega^{n-j}\mathcal{B%
	}_{j}(\alpha )  \label{Ap-i0}
\end{equation}%
and%
\begin{equation}
	\alpha \mathcal{B}_{n}(\omega+1;\alpha )\mathcal{=B}_{n}(\omega;\alpha )+n\omega^{n-1}
	\label{ABp-i}
\end{equation}%
(cf. \cite{Apostol,Bayad,Boyadzhiev,SimsekMTJPAM,SrivatavaChoi}).
Let $\phi $ be a complex numbers with $\phi \neq 1$. The Euler Frobenius
polynomials $H_{n}(\omega ;\phi )$ are defined by%
\begin{equation}
	F_{P}(t,\omega ,\phi )=\frac{1-\phi }{e^{t}-\phi }e^{t\omega
	}=\sum_{n=0}^{\infty }H_{n}(\omega ;\phi )\frac{t^{n}}{n!}  \label{FEp}
\end{equation}%
(cf. \cite{Schonberg,SimsekMTJPAM,SrivatavaChoi}).
Substituting $\omega =0$ into (\ref{FEp}), we have%
\begin{equation}
	F_{N}(t,\phi )=\frac{1-\phi }{e^{t}-\phi }=\sum_{n=0}^{\infty }H_{n}(\phi )%
	\frac{t^{n}}{n!}.  \label{FEn}
\end{equation}%
By using (\ref{FEn}), we have%
\begin{equation*}
	H_{n}(\phi )=\left\{ 
	\begin{array}{cc}
		1 & \text{for }n=0 \\ 
		\frac{1}{\phi }\sum\limits_{j=0}^{n}\left( 
		\begin{array}{c}
			n \\ 
			j%
		\end{array}%
		\right) H_{j}(\phi ) & \text{for }n>0.%
	\end{array}%
	\right. 
\end{equation*}%
Substituting $\phi =-1$ into (\ref{FEn}), we have the Euler numbers $%
E_{n}=H_{n}(-1)$ (cf. \cite{Comtet,SimsekFPTA,SrivatavaChoi}).
The Apostol-Euler polynomials of the first kind $\mathcal{E}_{n}(\omega;\rho )$ and the Apostol-Euler numbers of the first kind $\mathcal{E}_{n}(\rho )$
are defined by means of the following generating functions: 
\begin{equation}
	F_{P1}(t,\omega ;\rho )=\frac{e^{t\omega }}{\rho e^{t}+1}=\sum_{n=0}^{\infty
	}\frac{\mathcal{E}_{n}(\omega ;\rho )}{2}\frac{t^{n}}{n!} \label{Cad3}
\end{equation}
and
\begin{equation}
	\frac{1}{\rho e^{t}+1}=\sum_{n=0}^{\infty }\frac{\mathcal{E}_{n}(\rho )}{2}%
	\frac{t^{n}}{n!}.  \label{aen}
\end{equation}

By combining (\ref{Cad3}) with (\ref{aen}), we have%
\begin{equation}
	\mathcal{E}_{n}(\omega ;\rho )=\sum\limits_{j=0}^{n}\binom{n}{j}\omega ^{n-j}%
	\mathcal{E}_{j}(\rho )  \label{AAEN}
\end{equation}%
(cf. \cite{Bayad,Boyadzhiev,SimsekMTJPAM,SrivatavaChoi}).

Substituting $\rho =1$ into (\ref{Cad3}), we have the Euler polynomials of
the first kind:%
\begin{equation*}
	E_{n}\left( \omega \right) =\mathcal{E}_{n}\left( \omega ;1\right) ,
\end{equation*}%
when $\omega=0$, we also have $E_{n}=E_{n}(0)$ (cf. \cite{Apostol,Bayad,Boyadzhiev,Comtet,Kilar, Riardon,SimsekMTJPAM,SrivatavaChoi}).
The Euler Frobenius type polynomials (the Eulerian polynomials) are defined by
\begin{equation}
	F_{a}(t,\rho )=\frac{1}{1-\rho e^{t(1-\rho )}}=\sum_{n=0}^{\infty }\frac{%
		A_{n}(\rho )}{1-\rho }\frac{t^{n}}{n!},  \label{en-1}
\end{equation}%
where $\rho \neq 1$, $A_{n}(\rho )$ is a polynomial in $\rho $\ of degree $%
n-1$ for $n>0$:%
\begin{equation}
	A_{n}(\rho )=\sum\limits_{j=0}^{n}A_{n,j}\rho ^{j},  \label{AnPl}
\end{equation}%
where $A_{n,j}$ is an integer numbers, known as the Eulerian numbers:%
\begin{equation}
	A_{n,j}=\sum\limits_{v=0}^{j}(-1)^{v}\binom{n+1}{v}(j-v)^{n},  \label{EnFor}
\end{equation}%
$j=1,2,\dots,n$, $0\leq j<n$, $n\in \mathbb{N}$ (cf. \cite{Butzer Hauss,CarlitzRos,Comtet}). The Worpitzky identity for the Eulerian
numbers is given as follows:%
\begin{equation}
	\omega ^{n}=\sum\limits_{v=0}^{n}\binom{\omega +v-1}{n}A_{n,v},  \label{w}
\end{equation}%
where $\omega\in \mathbb{R}$, $n\in \mathbb{N}_{0}$ (cf. \cite{Butzer
	Hauss,CarlitzRos}).

These numbers have many interesting combinatoric applications. For instance,
let $A=\left\{ 1,2,3,\ldots ,n\right\} $ and $P(n,j)$ denote the number of
permutation of the elements of the set $A$ that show exactly $j$ increases
between adjacent elements, the first element always being counted as a jump.
Thus, it is known that%
\begin{equation*}
	P(n,j)=A_{n,j}
\end{equation*}%
(cf. \cite{CarlitzRos,Riardon}). In the literature, the
numbers $A_{n,j}$ are represented by many different notations. Some of these
are presented as follows: $A_{n,j}$, $A(n,k)$, $\left\langle 
\begin{array}{c}
n \\ 
j%
\end{array}%
\right\rangle $, $E(n,k)$, $W_{n,k}$ (cf. \cite{Comtet,Butzer Hauss,Riardon,Petersen,Schonberg,SimsekASCM2013,SimsekFPTA,SimsekAADM2018}).
By using umbral calculus method in (\ref{en-1}), we get%
\begin{equation*}
	A_{n}(\rho )=\rho \sum\limits_{j=0}^{n}\binom{n}{j}\left( 1-\rho \right)
	^{n-j}A_{j}(\rho ),
\end{equation*}%
where $A_{0}(\rho )=1$.

Generating functions for the Bernstein basis functions $B_{j}^{k}(\omega)$ are
given by%
\begin{equation}
	f_{\mathbb{B},d}(t,\omega )=(\omega t)^{d}e^{t(1-\omega
		\omega)}=\sum\limits_{k=d}^{\infty }d!B_{d}^{k}(\omega )\frac{t^{k}}{k!},
	\label{BBf}
\end{equation}%
where%
\begin{equation}
	B_{d}^{k}(\omega )=\binom{k}{d}\omega ^{d}(1-\omega )^{k-d}  \label{cBBF}
\end{equation}%
$0\leq d\leq k,$ and $d,k\in \mathbb{N}_{0}$, if $d>k$, then%
\begin{equation*}
	B_{d}^{k}(\omega)=0
\end{equation*}%
(cf. \cite{SimskAcikgoz,SimsekFPABERNST}). In recent years,
many articles have been published covering the generating functions of
Bernstein base functions and their applications in many different fields.
Some of these studies are (cf. \cite{Acikgoz,amc,GSS,BuketAADM,BuketFilomat,SimskAcikgoz}). The
Bernstein polynomials and their properties are available in \cite{lopez} and 
\cite{Salomon} in a very efficient and practical way.

Goldman \cite{Goldman} constructed the following generating function for the
uniform $B$-splines from $N_{0,n}(\omega ;p)$:%
\begin{eqnarray}
	G_{0}(\omega ,t;p) &=&\sum\limits_{j=0}^{p}(-1)^{j}\left( \frac{(\omega
		-j)^{j}t^{j}}{j!}+\frac{(\omega -j)^{j-1}t^{j-1}}{(j-1)!}\right) e^{(\omega
		-j)t}  \label{G-1} \\
	&=&\sum\limits_{n=0}^{\infty }N_{0,n}(\omega ;p)t^{n},  \notag
\end{eqnarray}%
$p\leq \omega\leq p+1$.

In the literature, the uniform $B$-splines $N_{0,n}(\omega ;p)$ are
represented by many different notations. Some of these are presented as
follows: $N_{0,n}(\omega )$, $B_{k}^{n}(\omega )$, $B_{n}(\omega )$ (
cf. \cite{Goldman,Farouki,Salomon,Schonberg}).

By using (\ref{G-1}), using generating function method, Goldman \cite[%
Theorem 3]{Goldman} proved the following well-known Schoenberg's identity
and de Boor recurrence for the uniform $B$-splines, respectively:%
\begin{equation}
	N_{0,n}(\omega ;p)=\frac{1}{n!}\sum\limits_{j=0}^{p}(-1)^{j}\binom{n+1}{j}%
	(\omega -j)^{n}  \label{Bs}
\end{equation}%
where $p\leq \omega \leq p+1$ and%
\begin{equation*}
	N_{0,n}(\omega ;p)=\frac{\omega }{n}N_{0,n-1}(\omega ;p)+\frac{n+1-\omega }{n%
	}N_{1,n-1}(\omega ;p)
\end{equation*}%
(cf. \cite[Theorem 3]{Goldman}).

By using (\ref{Bs}), we values of the basis $N_{0,n}(\omega;p )$ are given as follows:

$N_{0,n}(\omega ;0)=\frac{1}{n!}\omega ^{n}$, $0\leq \omega \leq 1$

$N_{0,n}(\omega ;1)=\frac{1}{n!}\left( \omega ^{n}-\binom{n+1}{1}(\omega
-1)^{n}\right) $, $1\leq \omega \leq 2$

$N_{0,n}(\omega ;2)=\frac{1}{n!}\left( \omega ^{n}-\binom{n+1}{1}(\omega
-1)^{n}+\binom{n+1}{2}(\omega -2)^{n}\right) $, $1\leq \omega \leq 2$

$N_{0,n}(\omega ;3)=\frac{1}{n!}\left( \omega ^{n}-\binom{n+1}{1}(\omega
-1)^{n}+\binom{n+1}{2}(\omega -2)^{n}-\binom{n+1}{3}(\omega -3)^{n}\right) $%
, $1\leq \omega \leq 2$.\\

For $n=1$, we have

$N_{0,1}(\omega ;0)=\omega $, $0\leq \omega \leq 1$

$N_{0,1}(\omega ;1)=-\omega +2$, $1\leq \omega \leq 2$

$N_{0,1}(\omega ;p)=0$, $p\leq \omega \leq p+1$ if $p\geq 2$. \\

For $n=2$, we have

$N_{0,2}(\omega ;0)=\frac{1}{2}\omega ^{2}$, $0\leq \omega \leq 1$

$N_{0,2}(\omega ;1)=\frac{1}{2}(-2\omega ^{2}+6\omega -3)$, $1\leq \omega
\leq 2$

$N_{0,2}(\omega ;2)=\frac{1}{2}(\omega ^{2}-6\omega +9)$, $2\leq \omega \leq
3$

$N_{0,2}(\omega ;p)=0$, $p\leq \omega \leq p+1$ if $p\geq 3$.\\

Thus we have $N_{0,n}(\omega ;p)=0$, $p\leq \omega \leq p+1$ if $p\geq n+1$
and so on.\\

A very brief introduction to the $p$-adic integrals are presented as
follows: 

Let $\Psi :\mathbb{Z}_{p}\rightarrow \mathbb{C}_{p}$ be a uniformly
differential function on $\mathbb{Z}_{p}$. The Volkenborn integral or the $p$%
-adic bosonic integral is given by%
\begin{equation}
	\int\limits_{\mathbb{Z}_{p}}\Psi \left( \omega \right) d\mu _{1}\left(
	\omega \right) =\underset{m\rightarrow \infty }{\lim }p^{-m}%
	\sum_{v=0}^{p^{m}-1}\Psi \left( v\right) ,  \label{Vi}
\end{equation}%
where%
\begin{equation*}
	\mu _{1}\left( \omega \right) =p^{-m}
\end{equation*}%
(cf. \cite{Schikof,Volkenborn,T. Kim,MSKIM,SimsekMTJPAM}; see also the references cited in each of these
earlier works).

The fermionic $p$-adic integral is given by%
\begin{equation}
	\int\limits_{\mathbb{Z}_{p}}\Psi \left( \omega \right) d\mu _{-1}\left(
	\omega \right) =\underset{m\rightarrow \infty }{\lim }\sum_{v=0}^{p^{m}-1}%
	\left( -1\right) ^{v}\Psi \left( v\right) ,  \label{Fi}
\end{equation}%
where%
\begin{equation*}
	\mu _{-1}\left( \omega \right) =\left( -1\right) ^{\omega }
\end{equation*}%
(cf. \cite{Kim2006TMIC}, see also \cite{MSKIMjnt2009,T. Kim,SimsekMTJPAM}).

\section{Computation Formulas of the Apostol-Bernoulli Polynomials, Eulerian Numbers and Polynomials} \label{section2}

We give some computation formulas of the Apostol-Bernoulli polynomials.
These formulas involving the Stirling numbers of the second kind, the
Bernstein polynomials and the array polynomials. By applying $p$-adic
integrals to these formulas, and using the Witt identities for the Bernoulli
and Euler numbers, we give some identities and certain family of finite sums.

By combining (\ref{ApstlBernNumbr}) with (\ref{ABP}), we have%
\begin{equation*}
	\sum_{n=0}^{\infty }\mathcal{B}_{n}(\omega ;\rho )\frac{t^{n}}{n!}%
	=\sum_{n=0}^{\infty }\sum\limits_{j=0}^{n}\mathcal{B}_{j}\left( \rho \right) 
	\frac{\omega ^{n-j}t^{n}}{j!\left( n-j\right) !}.
\end{equation*}%
Thus, the coefficients of $\frac{t^{n}}{n!}$ on both sides of the previous equation are equalized yields (\ref{Ap-i0}).

Joining (\ref{Ap-i0}) with (\ref{B1}) and (\ref{cBBF}), for $n\in \mathbb{N}_{0}$, we have the following known result:

\begin{equation}
	\mathcal{B}_{n}(\omega ;\rho
	)=\sum\limits_{j=0}^{n}\sum\limits_{s=0}^{j-1}\left( -1\right) ^{j-1}\frac{%
		\binom{n}{j}}{\binom{j-1}{s}}\frac{j}{\left( \rho -1\right) ^{j}}%
	s!B_{s}^{j-1}\left( \rho \right) S_{2}\left( j-1,s\right) \omega ^{n-j}.
	\label{ABP.F}
\end{equation}

Using (\ref{ABP}), we also get%
\begin{equation}
	\sum_{m=0}^{\infty }\mathcal{B}_{m}(\omega ;\rho )\frac{t^{m}}{m!}=\frac{t}{%
		\rho -1}e^{t\omega }\left( 1+\sum_{n=1}^{\infty }\left( \frac{\rho }{%
		1-\rho }\right) ^{n}\left( e^{t}-1\right) ^{n}\right) .  \label{fe}
\end{equation}%
Combining (\ref{fe}) with (\ref{arP}), we get%
\begin{equation*}
	\sum_{m=0}^{\infty }\mathcal{B}_{m}(\omega ;\rho )\frac{t^{m}}{m!}=\frac{t}{%
		\rho -1}\left( \sum_{m=0}^{\infty }\frac{\omega ^{m}t^{m}}{m!}%
	+\sum_{m=0}^{\infty }\sum_{n=1}^{m}\left( \frac{\rho }{1-\rho }\right)
	^{n}n!S_{n}^{m}(\omega )\frac{t^{m}}{m!}\right) .
\end{equation*}%
Therefore%
\begin{equation*}
	\sum_{m=0}^{\infty }\mathcal{B}_{m}(\omega ;\rho )\frac{t^{m}}{m!}=\frac{1}{%
		\rho -1}\left( \sum_{m=0}^{\infty }m\left( \omega
	^{m-1}+\sum_{n=1}^{m-1}\left( \frac{\rho }{1-\rho }\right)
	^{n}n!S_{n}^{m-1}(\omega )\right) \frac{t^{m}}{m!}\right) .
\end{equation*}%
Comparing the coefficients of $\frac{t^{m}}{m!}$ on both sides of the last
equation, we arrive at the following theorem:
\begin{theorem}
	Let $m\in \mathbb{N}$ with $n>1$. Then we have%
	\begin{equation*}
		\mathcal{B}_{m}(\omega ;\rho )=\frac{m}{\rho -1}\left(
		\omega^{m-1}+\sum_{n=1}^{m-1}\left( \frac{\rho }{1-\rho }\right)
		^{n}n!S_{n}^{m-1}(\omega )\right) .
	\end{equation*}
\end{theorem}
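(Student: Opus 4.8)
The plan is to work entirely at the level of the generating function (\ref{ABP}) and to extract the stated formula by comparing Taylor coefficients in $t$. The starting point is the defining identity $\sum_{m=0}^{\infty}\mathcal{B}_m(\omega;\rho)\frac{t^m}{m!}=\frac{t}{\rho e^{t}-1}e^{t\omega}$, so the whole problem reduces to re-expanding the kernel $\frac{t}{\rho e^{t}-1}$ into a shape whose coefficients are visibly the array-polynomial combination appearing on the right-hand side.

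First I would rewrite the denominator so as to isolate the factor $\rho-1$ from the theorem: $\rho e^{t}-1=\rho(e^{t}-1)+(\rho-1)=(\rho-1)\left(1+\frac{\rho}{\rho-1}(e^{t}-1)\right)$. Because $e^{t}-1$ has zero constant term, the quantity $\frac{\rho}{\rho-1}(e^{t}-1)$ is a formal power series with no constant term, so the geometric expansion $\frac{1}{1+x}=\sum_{n\geq 0}(-x)^{n}$ is a legitimate identity in the ring of formal power series. This is precisely what produces (\ref{fe}), the sign flipping $-\frac{\rho}{\rho-1}$ into $\frac{\rho}{1-\rho}$. I regard this formal-power-series justification, rather than any analytic estimate, as the one conceptual point requiring care.

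Next I would bring in the array polynomials. Invoking (\ref{arP}) in the form $e^{t\omega}(e^{t}-1)^{n}=\sum_{m=0}^{\infty}n!\,S_n^m(\omega)\frac{t^m}{m!}$, together with $e^{t\omega}=\sum_{m=0}^{\infty}\omega^{m}\frac{t^m}{m!}$ for the $n=0$ term, and interchanging the order of summation, I obtain a single series whose $\frac{t^m}{m!}$-coefficient is $\omega^{m}+\sum_{n=1}^{m}\left(\frac{\rho}{1-\rho}\right)^{n}n!\,S_n^m(\omega)$. The upper limit $n=m$ is forced by the vanishing $S_n^m(\omega)=0$ for $n>m$, which truncates the otherwise infinite inner sum.

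Finally I would multiply through by the remaining factor $t/(\rho-1)$. Writing the series just obtained as $\sum_{m\geq 0}a_m\frac{t^m}{m!}$, multiplication by $t$ yields $\sum_{m\geq 0}a_m\frac{t^{m+1}}{m!}=\sum_{m\geq 1}m\,a_{m-1}\frac{t^m}{m!}$; this single shift manufactures the leading factor $m$ and replaces $a_m$ by $a_{m-1}$, i.e.\ turns $\omega^{m}$ into $\omega^{m-1}$ and caps the array-polynomial sum at $m-1$. Comparing the coefficients of $\frac{t^m}{m!}$ on both sides then gives the asserted closed form. The bookkeeping of this index shift is the main thing to get right; everything else is routine coefficient comparison.
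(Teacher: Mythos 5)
Your proposal is correct and follows essentially the same route as the paper: decompose $\rho e^{t}-1=(\rho-1)\bigl(1+\tfrac{\rho}{\rho-1}(e^{t}-1)\bigr)$, expand geometrically to reach the functional equation (\ref{fe}), insert the array-polynomial generating function (\ref{arP}), and shift indices via the factor $t$ before comparing coefficients of $\tfrac{t^{m}}{m!}$. Your explicit remarks on the formal-power-series legitimacy of the geometric expansion and on the truncation $S_{n}^{m}(\omega)=0$ for $n>m$ are points the paper leaves implicit, but they do not change the argument.
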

Combining (\ref{fe}) with (\ref{Sn2}), we get%
\begin{equation*}
	\sum_{m=0}^{\infty }\mathcal{B}_{m}(\omega ;\rho )\frac{t^{m}}{m!}=\frac{t}{%
		\rho -1}\left( \sum_{m=0}^{\infty }\frac{\omega ^{m}t^{m}}{m!}%
	+\sum_{m=0}^{\infty }\sum_{v=0}^{m}\binom{m}{v}\omega
	^{m-v}\sum_{n=1}^{v}\left( \frac{\rho }{1-\rho }\right) ^{n}n!S_{2}(v,n)%
	\frac{t^{m}}{m!}\right) .
\end{equation*}%
Therefore%
\begin{eqnarray*}
	&&	\sum_{m=0}^{\infty }\mathcal{B}_{m}(\omega ;\rho )\frac{t^{m}}{m!}\\&=&\frac{1}{%
		\rho -1}\left( \sum_{m=0}^{\infty }m\left( \omega ^{m-1}+\sum_{v=0}^{m-1}%
	\binom{m-1}{v}\omega ^{m-v-1}\sum_{n=1}^{v}\left( \frac{\rho }{1-\rho }%
	\right) ^{n}n!S_{2}(v,n)\right) \frac{t^{m}}{m!}\right) .
\end{eqnarray*}%
Comparing the coefficients of $\frac{t^{m}}{m!}$ on both sides of the last
equation, we arrive at the following theorem:

\begin{theorem}
	Let $m\in \mathbb{N}$ with $n>1$. Then we have
	\begin{equation*}
		\mathcal{B}_{m}(\omega ;\rho )=\frac{m}{\rho -1}\left( \omega
		^{m-1}+\sum_{v=0}^{m-1}\binom{m-1}{v}\omega ^{m-v-1}\sum_{n=1}^{v}\left( 
		\frac{\rho }{1-\rho }\right) ^{n}n!S_{2}(v,n)\right) .
	\end{equation*}
\end{theorem}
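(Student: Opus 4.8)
The plan is to derive the stated closed form directly from the generating function (\ref{ABP}) by first extracting the functional equation (\ref{fe}) and then expanding the resulting power of $e^{t}-1$ through the Stirling numbers of the second kind. First I would rewrite the kernel $\frac{t}{\rho e^{t}-1}$ by splitting the denominator as $\rho e^{t}-1=(\rho-1)+\rho(e^{t}-1)$, so that $\frac{t}{\rho e^{t}-1}=\frac{t}{\rho-1}\cdot\frac{1}{1+\frac{\rho}{\rho-1}(e^{t}-1)}$. Applying the geometric series $\frac{1}{1+u}=\sum_{n\geq 0}(-u)^{n}$ with $u=\frac{\rho}{\rho-1}(e^{t}-1)$ and noting $-\frac{\rho}{\rho-1}=\frac{\rho}{1-\rho}$, I obtain the factor $1+\sum_{n\geq 1}\left(\frac{\rho}{1-\rho}\right)^{n}(e^{t}-1)^{n}$; multiplying by $e^{t\omega}$ recovers (\ref{fe}). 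This series is legitimate in the ring of formal power series because $(e^{t}-1)^{n}$ has order $n$ in $t$, so only finitely many terms contribute to each coefficient.

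The decisive step is to expand each $(e^{t}-1)^{n}$ by the defining relation (\ref{Sn2}), namely $(e^{t}-1)^{n}=\sum_{v\geq 0}n!\,S_{2}(v,n)\frac{t^{v}}{v!}$, while keeping the shift factor $e^{t\omega}=\sum_{k\geq 0}\omega^{k}\frac{t^{k}}{k!}$ separate. This is precisely what distinguishes the present theorem from the preceding one: there the product $e^{t\omega}(e^{t}-1)^{n}$ is absorbed wholesale into the array polynomials $S_{n}^{m}(\omega)$ via (\ref{arP}), whereas here I keep the two factors apart so that the Stirling numbers survive explicitly. Forming the Cauchy product of $e^{t\omega}$ with the Stirling expansion turns the coefficient of $\frac{t^{m}}{m!}$ in $e^{t\omega}(e^{t}-1)^{n}$ into $\sum_{v=0}^{m}\binom{m}{v}\omega^{m-v}n!\,S_{2}(v,n)$, and since $S_{2}(v,n)=0$ for $n>v$ the inner sum over $n$ collapses to $\sum_{n=1}^{v}$; this yields the bracketed double sum in the intermediate display preceding the statement.

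Finally I would account for the prefactor $\frac{t}{\rho-1}$. Multiplication by $t$ shifts the power-series index, replacing $\sum_{m}a_{m}\frac{t^{m}}{m!}$ by $\sum_{m}m\,a_{m-1}\frac{t^{m}}{m!}$, so the coefficient of $\frac{t^{m}}{m!}$ on the right is $m$ times the coefficient of $\frac{t^{m-1}}{(m-1)!}$ in the bracket. The constant-$1$ summand contributes $m\,\omega^{m-1}$, while the Stirling summand contributes $m\sum_{v=0}^{m-1}\binom{m-1}{v}\omega^{m-1-v}\sum_{n=1}^{v}\left(\frac{\rho}{1-\rho}\right)^{n}n!\,S_{2}(v,n)$. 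Comparing with the left-hand side $\sum_{m}\mathcal{B}_{m}(\omega;\rho)\frac{t^{m}}{m!}$ and dividing through by $\rho-1$ gives the asserted formula, using $\omega^{m-1-v}=\omega^{m-v-1}$.

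The only genuinely delicate point is the bookkeeping in the index shift coming from the $t$-multiplication, combined with the collapse of the $n$-summation once $S_{2}(v,n)=0$ for $n>v$ is invoked; everything else is a comparison of coefficients. I expect no analytic obstruction, since the whole argument can be read as an identity of formal power series (equivalently, it holds for $t$ in a neighbourhood of $0$ where the geometric series converges), so termwise rearrangement of the Cauchy product is justified throughout.
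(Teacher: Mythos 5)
Your proposal is correct and follows essentially the same route as the paper: it derives the functional equation (\ref{fe}) by the same splitting $\rho e^{t}-1=(\rho-1)+\rho(e^{t}-1)$, expands $(e^{t}-1)^{n}$ via (\ref{Sn2}) while keeping $e^{t\omega}$ as a separate Cauchy factor, and handles the prefactor $t$ by the same index shift before comparing coefficients of $\frac{t^{m}}{m!}$. Your remark contrasting this with the array-polynomial absorption used in the preceding theorem accurately reflects the paper's own distinction between the two results.
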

By using (\ref{aen}), we get%
\begin{equation*}
	\sum_{m=0}^{\infty }\mathcal{E}_{m}(\rho )\frac{t^{m}}{m!}=\frac{2}{1-\rho }%
	\left( 1+\sum_{n=1}^{\infty }(-1)^{n}\left( \frac{\rho }{1-\rho }\right)
	^{n}\left( e^{t}+1\right) ^{n}\right) .
\end{equation*}%
Combining the above equation with following generating function%
\begin{equation*}
	\left( e^{t}+1\right) ^{n}=n!\sum_{m=0}^{\infty }y_{1}(m,n;1)\frac{t^{m}}{m!}
\end{equation*}%
(cf. \cite[Eq. (8)]{SimsekAADM2018}), we get%
\begin{eqnarray*}
	\mathcal{E}_{0}(\rho )+\sum_{m=1}^{\infty }\mathcal{E}_{m}(\rho )\frac{t^{m}%
	}{m!} &=&\frac{2}{1-\rho }\sum_{n=1}^{\infty }(-1)^{n}\left( \frac{\rho }{%
		1-\rho }\right) ^{n}n!y_{n}(0,n;1)+ \\
	&&+\frac{2}{1-\rho }\sum_{m=1}^{\infty }\sum_{n=1}^{\infty }(-1)^{n}\left( 
	\frac{\rho }{1-\rho }\right) ^{n}n!y_{n}(m,n;1)\frac{t^{m}}{m!},
\end{eqnarray*}%
where assuming that $\left\vert \frac{\rho }{1-\rho }\right\vert <1$, we have%
\begin{equation*}
	\mathcal{E}_{0}(\rho )+\sum_{m=1}^{\infty }\mathcal{E}_{m}(\rho )\frac{t^{m}%
	}{m!}=\frac{2}{1+\rho }+\frac{2}{1-\rho }\sum_{m=1}^{\infty
	}\sum_{n=1}^{\infty }(-1)^{n}\left( \frac{\rho }{1-\rho }\right)
	^{n}n!y_{1}(m,n;1)\frac{t^{m}}{m!}.
\end{equation*}%
Comparing the coefficients of $\frac{t^{m}}{m!}$ on both sides of the above
equation, we arrive at the following theorem:

\begin{theorem}
	Let $m\in \mathbb{N}$. Then we have%
	\begin{equation}
		\mathcal{E}_{m}(\rho )=\frac{2}{1-\rho }\sum_{n=1}^{\infty }(-1)^{n}\left( 
		\frac{\rho }{1-\rho }\right) ^{n}n!y_{1}(m,n;1),  \label{cen.}
	\end{equation}%
	where $\left\vert \frac{\rho }{1-\rho }\right\vert <1$.
\end{theorem}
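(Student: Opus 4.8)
The plan is to read off the coefficient of $\frac{t^{m}}{m!}$ from the generating function (\ref{aen}) after re-expanding its right-hand side as a power series in the quantity $(e^{t}+1)^{n}$. First I would multiply (\ref{aen}) by $2$ to get $\sum_{m=0}^{\infty}\mathcal{E}_{m}(\rho)\frac{t^{m}}{m!}=\frac{2}{\rho e^{t}+1}$, and then rewrite the denominator as $\rho e^{t}+1=(1-\rho)+\rho(e^{t}+1)$, so that
\begin{equation*}
\frac{2}{\rho e^{t}+1}=\frac{2}{1-\rho}\cdot\frac{1}{1+\frac{\rho}{1-\rho}(e^{t}+1)}.
\end{equation*}
Applying the geometric series $\frac{1}{1+x}=\sum_{n=0}^{\infty}(-1)^{n}x^{n}$ with $x=\frac{\rho}{1-\rho}(e^{t}+1)$ then reproduces the expansion displayed just before the theorem, namely $\frac{2}{1-\rho}\bigl(1+\sum_{n=1}^{\infty}(-1)^{n}(\frac{\rho}{1-\rho})^{n}(e^{t}+1)^{n}\bigr)$.

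The second step is to substitute the known generating function $(e^{t}+1)^{n}=n!\sum_{m=0}^{\infty}y_{1}(m,n;1)\frac{t^{m}}{m!}$ (cited from \cite{SimsekAADM2018}) into each summand and interchange the order of the $n$- and $m$-summations. Separating the $m=0$ contribution is the key bookkeeping point: evaluating $(e^{t}+1)^{n}$ at $t=0$ gives $2^{n}$, whence $y_{1}(0,n;1)=2^{n}/n!$, so the $m=0$ terms collapse, via a second geometric series in $\frac{-2\rho}{1-\rho}$, to $\frac{2}{1-\rho}\cdot\frac{1-\rho}{1+\rho}=\frac{2}{1+\rho}$, which is exactly $\mathcal{E}_{0}(\rho)$. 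This matches the displayed intermediate identity and cleanly isolates the tail $m\ge 1$.

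Finally I would compare the coefficients of $\frac{t^{m}}{m!}$ on both sides for each fixed $m\ge 1$, which immediately yields the claimed formula (\ref{cen.}).

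The main obstacle is analytic rather than algebraic: one must justify the geometric-series expansion and the interchange of the double summation. The geometric step requires $\bigl|\frac{\rho}{1-\rho}(e^{t}+1)\bigr|<1$, which near $t=0$ amounts essentially to $\bigl|\frac{2\rho}{1-\rho}\bigr|<1$; the same estimate governs the growth of $n!\,y_{1}(m,n;1)=\sum_{k=0}^{n}\binom{n}{k}k^{m}$, which for fixed $m$ behaves like $2^{n}(n/2)^{m}$ as $n\to\infty$ and thereby secures the absolute convergence of the series in (\ref{cen.}). I would therefore frame the hypothesis on $\rho$ strongly enough to guarantee this absolute convergence, which legitimizes the term-by-term rearrangement; once that is in place, the coefficient comparison is routine.
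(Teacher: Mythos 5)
Your proposal is correct and follows essentially the same route as the paper: rewrite $\rho e^{t}+1=(1-\rho)+\rho(e^{t}+1)$, expand geometrically, insert $(e^{t}+1)^{n}=n!\sum_{m}y_{1}(m,n;1)\frac{t^{m}}{m!}$, collapse the $m=0$ contribution to $\frac{2}{1+\rho}=\mathcal{E}_{0}(\rho)$ via the series in $\frac{-2\rho}{1-\rho}$, and compare coefficients of $\frac{t^{m}}{m!}$. Your added care about absolute convergence (noting that the collapse really needs $\left\vert \frac{2\rho}{1-\rho}\right\vert <1$ rather than just $\left\vert \frac{\rho}{1-\rho}\right\vert <1$) is a small but genuine improvement in rigor over the paper's presentation.
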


Since%
\begin{equation*}
	y_{1}(m,n;1)=\frac{1}{n!}\sum_{h=0}^{n}\binom{n}{h}h^{m}
\end{equation*}%
(cf. \cite{SimsekAADM2018}), (\ref{cen.}) reduces to the following corollary:

\begin{corollary}
	Let $m\in \mathbb{N}$. Then we have%
	\begin{equation*}
		\mathcal{E}_{m}(\rho )=\frac{2}{1-\rho }\sum_{n=1}^{\infty
		}\sum_{h=0}^{n}(-1)^{n}\binom{n}{h}h^{m}\left( \frac{\rho }{1-\rho }\right)
		^{n},
	\end{equation*}%
	where $\left\vert \frac{\rho }{1-\rho }\right\vert <1$.
\end{corollary}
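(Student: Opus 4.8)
The plan is to obtain this identity as an immediate specialization of the preceding Theorem, in which $\mathcal{E}_{m}(\rho )$ is expressed in (\ref{cen.}) in terms of the numbers $y_{1}(m,n;1)$. The only additional ingredient required is the closed-form evaluation $y_{1}(m,n;1)=\frac{1}{n!}\sum_{h=0}^{n}\binom{n}{h}h^{m}$, quoted immediately before the statement from \cite{SimsekAADM2018}. Thus the corollary is a one-line consequence of the Theorem together with this formula, and no genuinely new computation is involved.

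First I would start from (\ref{cen.}), namely $\mathcal{E}_{m}(\rho )=\frac{2}{1-\rho }\sum_{n=1}^{\infty }(-1)^{n}\left( \frac{\rho }{1-\rho }\right) ^{n}n!\,y_{1}(m,n;1)$, which is valid under the hypothesis $\left\vert \frac{\rho }{1-\rho }\right\vert <1$. Then I would substitute the explicit value of $y_{1}(m,n;1)$ into the $n$-th term of the series. Since $n!\cdot \frac{1}{n!}=1$, the factorials cancel termwise, so that the $n$-th summand becomes $(-1)^{n}\left( \frac{\rho }{1-\rho }\right) ^{n}\sum_{h=0}^{n}\binom{n}{h}h^{m}$. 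Keeping the constant prefactor $\frac{2}{1-\rho }$ outside the sum then yields exactly the asserted double sum.

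There is essentially no hard step here: the argument is a single substitution followed by the cancellation of a factorial. The one point that merits a line of justification is the legitimacy of writing the answer as an iterated sum over $n$ and $h$. Because for each fixed $n$ the inner sum over $h$ is finite, no rearrangement of the infinite series is actually performed — the substitution is carried out termwise — so the convergence condition $\left\vert \frac{\rho }{1-\rho }\right\vert <1$ inherited from the Theorem is precisely what guarantees the validity of the expression, and nothing further needs to be verified. Hence the corollary follows at once.
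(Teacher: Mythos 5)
Your proposal is correct and matches the paper's own derivation exactly: the paper likewise obtains the corollary by substituting the stated closed form $y_{1}(m,n;1)=\frac{1}{n!}\sum_{h=0}^{n}\binom{n}{h}h^{m}$ into (\ref{cen.}) and cancelling the factorial termwise. Nothing is missing, and your remark that the inner sum is finite (so no series rearrangement occurs) is a reasonable, if optional, extra observation.
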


Since%
\begin{equation*}
	\sum_{h=0}^{n}\binom{n}{h}h^{m}=\frac{d^{m}}{dt^{m}}\left\{ \left(
	e^{t}+1\right) ^{n}\right\} \left\vert _{t=0}\right. ,
\end{equation*}%
(cf. \cite{glomberg,SimsekAADM2018}), we get the following
result:
\begin{corollary}
	Let $m\in \mathbb{N}$. Then we have%
	\begin{equation*}
		\mathcal{E}_{m}(\rho )=\frac{2}{1-\rho }\sum_{n=1}^{\infty }(-1)^{n}\left( 
		\frac{\rho }{1-\rho }\right) ^{n}\frac{d^{m}}{dt^{m}}\left\{ \left(
		e^{t}+1\right) ^{n}\right\} \left\vert _{t=0}\right. ,
	\end{equation*}%
	where $\left\vert \frac{\rho }{1-\rho }\right\vert <1$.
\end{corollary}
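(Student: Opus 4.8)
The plan is to obtain the statement directly from the series already established in (\ref{cen.}) by replacing a finite binomial sum with an equivalent differential operator, so that the whole argument reduces to one elementary identity followed by a substitution. The starting point is formula (\ref{cen.}), namely $\mathcal{E}_{m}(\rho)=\frac{2}{1-\rho}\sum_{n=1}^{\infty}(-1)^{n}\left(\frac{\rho}{1-\rho}\right)^{n}n!\,y_{1}(m,n;1)$, together with the closed form $n!\,y_{1}(m,n;1)=\sum_{h=0}^{n}\binom{n}{h}h^{m}$ recorded just above. The only new content is then to recognize this inner sum $\sum_{h=0}^{n}\binom{n}{h}h^{m}$ as $\frac{d^{m}}{dt^{m}}\{(e^{t}+1)^{n}\}\vert_{t=0}$ and to substitute it back.

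To verify that operator identity I would expand by the binomial theorem, writing $(e^{t}+1)^{n}=\sum_{h=0}^{n}\binom{n}{h}e^{ht}$, and then differentiate $m$ times with respect to $t$. Since the sum over $h$ is finite, termwise differentiation is automatic, and each application of $\frac{d}{dt}$ to $e^{ht}$ pulls down a factor $h$; after $m$ differentiations this produces $\sum_{h=0}^{n}\binom{n}{h}h^{m}e^{ht}$. Evaluating at $t=0$ collapses every exponential to $1$ and leaves exactly $\sum_{h=0}^{n}\binom{n}{h}h^{m}$, as required. Inserting this representation in place of $n!\,y_{1}(m,n;1)$ in (\ref{cen.}) reproduces the asserted formula verbatim; equivalently, one may start from the immediately preceding corollary, factor $\sum_{h=0}^{n}\binom{n}{h}h^{m}$ out of the double sum, and apply the same identity.

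There is no genuinely hard step here; the single point that warrants a word is bookkeeping about the series rather than a real obstacle. The hypothesis $\left|\frac{\rho}{1-\rho}\right|<1$ is precisely the condition under which the geometric-type expansion leading to (\ref{cen.}) was carried out, so the convergence of $\sum_{n\geq 1}(-1)^{n}(\frac{\rho}{1-\rho})^{n}(e^{t}+1)^{n}$ near the origin and the legitimacy of carrying the evaluation operator $\frac{d^{m}}{dt^{m}}\vert_{t=0}$ into each term are inherited directly from that earlier derivation. Consequently the corollary is, in effect, the preceding corollary rewritten with the finite binomial sum encoded as a derivative at the origin, and the proof amounts to the one substitution described above.
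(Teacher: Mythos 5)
Your proposal is correct and follows essentially the same route as the paper: both start from (\ref{cen.}), use $n!\,y_{1}(m,n;1)=\sum_{h=0}^{n}\binom{n}{h}h^{m}$, and then replace this finite sum by $\frac{d^{m}}{dt^{m}}\left\{ \left( e^{t}+1\right) ^{n}\right\}\left\vert _{t=0}\right.$ (the paper cites this last identity, while you verify it by expanding $(e^{t}+1)^{n}$ binomially and differentiating termwise). No substantive difference.
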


Using (\ref{en-1}), we get%
\begin{equation}
	A_{0}(\rho )+\sum_{m=1}^{\infty }A_{m}(\rho )\frac{t^{m}}{m!}%
	=1+\sum_{n=1}^{\infty }\left( \frac{\rho }{1-\rho }\right) ^{n}\left(
	e^{t(1-\rho )}-1\right) ^{n}.  \label{fe-1}
\end{equation}%
By combining (\ref{fe-1}) and (\ref{Sn2}), we get%
\begin{equation*}
	A_{0}(\rho )+\sum_{m=1}^{\infty }A_{m}(\rho )\frac{t^{m}}{m!}%
	=1+\sum_{m=0}^{\infty }\sum_{n=1}^{m}\left( \frac{\rho }{1-\rho }\right)
	^{n}n!(1-\rho )^{m}S_{2}(m,n)\frac{t^{m}}{m!}.
\end{equation*}%
Comparing the coefficients of $\frac{t^{m}}{m!}$ on both sides of the above
equation yields
Let $m\in \mathbb{N}$. Then we have%
\begin{equation}
	A_{m}(\rho )=\sum_{n=1}^{m}\left( \frac{\rho }{1-\rho }\right) ^{n}n!(1-\rho
	)^{m}S_{2}(m,n).  \label{cap}
\end{equation}

Combining (\ref{cap}) with (\ref{cBBF}), we arrive at the following
corollary:

\begin{theorem}
	Let $m\in \mathbb{N}$. Then we have%
	\begin{equation*}
		A_{m}(\rho )=\sum_{n=1}^{m}\frac{n!}{\binom{m}{n}}B_{n}^{m}(\rho )S_{2}(m,n).
	\end{equation*}
\end{theorem}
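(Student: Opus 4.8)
The plan is to start from the closed-form expression for the Eulerian polynomials already established in equation (\ref{cap}) and to recognize that the $\rho$-dependent factor appearing there is, up to a binomial coefficient, precisely a Bernstein basis function. First I would isolate the $\rho$-dependent part of the summand in (\ref{cap}), namely $\left(\frac{\rho}{1-\rho}\right)^{n}(1-\rho)^{m}$, and simplify it by cancelling the common powers of $1-\rho$:
\begin{equation*}
\left(\frac{\rho}{1-\rho}\right)^{n}(1-\rho)^{m}=\frac{\rho^{n}}{(1-\rho)^{n}}(1-\rho)^{m}=\rho^{n}(1-\rho)^{m-n}.
\end{equation*}

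Next I would invoke the explicit formula (\ref{cBBF}) for the Bernstein basis functions with $d=n$ and $k=m$, which gives $B_{n}^{m}(\rho)=\binom{m}{n}\rho^{n}(1-\rho)^{m-n}$, and solve for the monomial product to obtain $\rho^{n}(1-\rho)^{m-n}=B_{n}^{m}(\rho)/\binom{m}{n}$. This rewriting is legitimate for every index $n$ in the summation range $1\leq n\leq m$, since then $0\leq n\leq m$ guarantees that the binomial coefficient is nonzero and that the Bernstein basis function is well defined (rather than the degenerate value $0$ occurring when $d>k$).

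Finally I would substitute this identity back into (\ref{cap}) term by term, so that
\begin{equation*}
A_{m}(\rho)=\sum_{n=1}^{m}n!\,S_{2}(m,n)\,\rho^{n}(1-\rho)^{m-n}=\sum_{n=1}^{m}\frac{n!}{\binom{m}{n}}B_{n}^{m}(\rho)S_{2}(m,n),
\end{equation*}
which is exactly the asserted formula. The argument is purely algebraic and direct, so I do not expect any genuine obstacle; the only points requiring care are the power cancellation $(1-\rho)^{m}/(1-\rho)^{n}=(1-\rho)^{m-n}$ and the correct matching of the summation index $n$ with the lower Bernstein index $d=n$ and of $m$ with the upper index $k=m$.
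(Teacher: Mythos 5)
Your proposal is correct and follows exactly the paper's own route: the paper obtains the theorem by "combining (\ref{cap}) with (\ref{cBBF})", which is precisely your substitution $\left(\frac{\rho}{1-\rho}\right)^{n}(1-\rho)^{m}=\rho^{n}(1-\rho)^{m-n}=B_{n}^{m}(\rho)/\binom{m}{n}$. You have simply spelled out the algebra that the paper leaves implicit.
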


By combining (\ref{FEn}) and (\ref{en-1}), we have the following functional
equation:%
\begin{equation*}
	F_{a}(t,\rho )=F_{N}\left( (1-\rho )t,\frac{1}{\rho }\right) .
\end{equation*}

Using this equation, we get%
\begin{equation*}
	A_{n}(\rho )=\left\{ 
	\begin{array}{cc}
		1 & \text{for }n=0 \\ 
		\rho (1-\rho )^{n}\sum\limits_{j=0}^{n}\left( 
		\begin{array}{c}
			n \\ 
			j%
		\end{array}%
		\right) H_{j}\left( \frac{1}{\rho }\right) & \text{for }n>0.%
	\end{array}%
	\right. 
\end{equation*}

By combining (\ref{ApstlBernNumbr}) and (\ref{en-1}), we have the following
functional equation:%
\begin{equation*}
	-tF_{a}(t,\rho )=K_{n}\left( (1-\rho )t,\rho \right) .
\end{equation*}%
Using this equation, for $n\in \mathbb{N}$, we have%
\begin{equation}
	A_{n-1}(\rho )=-\frac{(1-\rho )^{n}}{n}\mathcal{B}_{n}(\rho ).  \label{aEi}
\end{equation}

\begin{remark}
	Combining \textup{(\ref{cap})} with \textup{(\ref{B1})}, we arrive at \textup{(\ref{aEi})}.
\end{remark}

By combining (\ref{AnPl}), and (\ref{B1}) or using (\ref{aEi}), we give some
values of the polynomials $A_{n}(\rho )$ as follows:
\begin{eqnarray*}
	A_{1}(\rho ) &=&\frac{\left( \rho -1\right) ^{2}}{-2\rho }\mathcal{B}%
	_{2}\left( \rho \right) =A_{1,0}=1, \\
	A_{2}(\rho ) &=&\frac{\left( \rho -1\right) ^{3}}{3\rho }\mathcal{B}%
	_{3}\left( \rho \right) =A_{2,0}\rho +A_{2,1}=\rho +1 \\
	A_{3}(\rho ) &=&\frac{\left( \rho -1\right) ^{4}}{-4\rho }\mathcal{B}%
	_{4}\left( \rho \right) =A_{3,0}\rho ^{2}+A_{3,1}\rho +A_{3,2}=\rho
	^{2}+4\rho +1 \\
	A_{4}(\rho ) &=&\frac{\left( \rho -1\right) ^{5}}{5\rho }\mathcal{B}%
	_{5}\left( \rho \right) =A_{4,0}\rho ^{3}+A_{4,1}\rho ^{2}+A_{4,2}\rho
	+A_{4,3}=\alpha ^{3}+11\alpha ^{2}+11\alpha +1, \\
	A_{5}(\alpha ) &=&\frac{\left( \rho -1\right) ^{6}}{-6\rho }\mathcal{B}%
	_{6}\left( \rho \right) =A_{5,0}\rho ^{4}+A_{5,1}\rho ^{3}+A_{5,2}\rho
	^{2}+A_{5,3}\rho +A_{5,4} \\
	&=&\rho ^{4}+26\rho ^{3}+66\rho ^{2}+26\rho +1 \\
	A_{6}(\alpha ) &=&\frac{\left( \rho -1\right) ^{7}}{7\rho }\mathcal{B}%
	_{7}\left( \rho \right) =A_{6,0}\rho ^{5}+A_{6,1}\rho ^{4}+A_{6,2}\rho
	^{3}+A_{6,3}\rho ^{2}+A_{5,4}\rho +A_{6,5} \\
	&=&\rho ^{5}+57\rho ^{4}+302\rho ^{3}+302\rho ^{2}+57\rho +1,
\end{eqnarray*}%
and so on.

Boyadzhiev \cite{Boyadzhiev} gave a relation between the Apostol-Bernoulli
numbers and the geometric polynomials $W_{n}(w)$ is given as follows:%
\begin{equation}
	\mathcal{B}_{n}\left( \rho \right) =\frac{n}{\rho -1}W_{n-1}\left( \frac{%
		\rho }{1-\rho }\right)   \label{GP}
\end{equation}%
and%
\begin{equation*}
	\mathcal{B}_{n}\left( \frac{\rho }{1+\rho }\right) =-n\left( \rho +1\right)
	W_{n-1}\left( \rho \right) ,
\end{equation*}%
where $n\in \mathbb{N}$ and%
\begin{equation*}
	W_{n}(\rho )=\sum\limits_{j=0}^{n}j!S_{2}(n,j)\rho ^{j}.
\end{equation*}%
With the Euler (derivative) operator $\rho \frac{d}{d\rho }$, Boyadzhiev
also showed that%
\begin{equation*}
	\left( \rho \frac{d}{d\rho }\right) ^{k}\left\{ \frac{1}{1-\rho }\right\} =%
	\frac{1}{1-\rho }\omega _{k}\left( \frac{\rho }{1-\rho }\right)
	=\sum\limits_{v=0}^{\infty }v^{k}\rho ^{v},
\end{equation*}%
where $\left\vert \rho \right\vert <1$. Joining the above equation with (\ref%
{AnPl}), we get the following result:%
\begin{equation*}
	A_{k}(\rho )=\left( 1-\rho \right) ^{k+1}\left( \rho \frac{d}{d\rho }\right)
	^{k}\left\{ \frac{1}{1-\rho }\right\} .
\end{equation*}%
Combining the above equation with (\ref{aEi}) yields%
\begin{equation}
	-\frac{\mathcal{B}_{k+1}(\rho )}{n+1}=\left( \rho \frac{d}{d\rho }\right)
	^{k}\left\{ \frac{1}{1-\rho }\right\} .  \label{ad-1}
\end{equation}%
Joining the above equation with the following well-known equation which is
combined (\ref{FEn}) with (\ref{ABP}): 
\begin{equation}
	\mathcal{E}_{n}\left( \omega;\lambda \right) =-\frac{2}{n+1}\mathcal{B}%
	_{n+1}\left( \omega;-\lambda \right)   \label{RelationApostolEnBn}
\end{equation}%
(cf. \cite{SrivatavaChoi}), we also get%
\begin{equation}
	\mathcal{E}_{n}\left( \omega;\lambda \right) =\left( \rho \frac{d}{d\rho }\right)
	^{k}\left\{ \frac{1}{1-\rho }\right\} .  \label{ed-1}
\end{equation}%
We think that there are many different proofs of the equations (\ref{ad-1})
and (\ref{ed-1}). Some of them were also given by (cf. \cite{Bayad,Boyadzhiev,Gun,Luo,luo1,luo2,SrivatavaChoi}%
).

Combining (\ref{aEi}) with (\ref{Ap-i0}) and (\ref{ABp-i}) yields
\begin{equation}
	\sum\limits_{j=1}^{n}(-1)^{j+1}\binom{n}{j}j\left( \rho -1\right)
	^{-j}A_{j-1}(\rho )\left( \rho (\omega +1)^{n-j}-\omega ^{n-j}\right)
	=n\omega ^{n-1},  \label{epi}
\end{equation}
where $n\in \mathbb{N}$. By applying Volkenborn integral (\ref{Vi}) to (\ref{epi}) and using 
\begin{equation}
	\int\limits_{\mathbb{Z}_{p}}\omega ^{k}d\mu _{1}\left( \omega \right) =B_{k},
	\label{bv}
\end{equation}%
which is known as the Witt identity for the Bernoulli numbers, we obtain
\begin{equation*}
	\sum\limits_{j=1}^{n}(-1)^{j+1}\binom{n}{j}j\left( \rho -1\right)
	^{-j}A_{j-1}(\rho )\left( \rho B_{n-j}(1)-B_{n-j}\right) =nB_{n-1}.
\end{equation*}%
Combining the above equation with%
\begin{equation*}
	B_{n-j}(1)=B_{n-j}+\sum\limits_{k=0}^{n-j-1}\binom{n-j}{k}B_{k},
\end{equation*}%
we arrive at the following theorem:

\begin{theorem}
	Let $n\in \mathbb{N}$. Then we have%
	\begin{equation*}
		\sum\limits_{j=1}^{n}(-1)^{j+1}\binom{n}{j}j\left( \rho -1\right)
		^{-j}A_{j-1}(\rho )\left( \rho \sum\limits_{k=0}^{n-j}\binom{n-j}{k}%
		B_{k}-B_{n-j}\right) =nB_{n-1}.
	\end{equation*}
\end{theorem}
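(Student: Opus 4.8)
The plan is to regard the displayed result as what one obtains by applying the Volkenborn (bosonic) integral $\int_{\mathbb{Z}_p}\,\cdot\,d\mu_1(\omega)$ to the polynomial identity (\ref{epi}), which I may assume. The right-hand side of (\ref{epi}) is $n\omega^{n-1}$, so by the Witt identity (\ref{bv}) its integral is $nB_{n-1}$, already the right-hand side of the claim. On the left-hand side the $j$-sum is finite and its coefficients $(-1)^{j+1}\binom{n}{j}j(\rho-1)^{-j}A_{j-1}(\rho)$ are independent of $\omega$; hence, by linearity of the integral (\ref{Vi}), I would pull them out, so that it remains only to integrate the bracket $\rho(\omega+1)^{n-j}-\omega^{n-j}$ term by term.

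The two resulting integrals are handled by (\ref{bv}). Directly, $\int_{\mathbb{Z}_p}\omega^{n-j}d\mu_1(\omega)=B_{n-j}$. For the shifted power I would expand by the binomial theorem and apply (\ref{bv}) to each monomial, obtaining $\int_{\mathbb{Z}_p}(\omega+1)^{n-j}d\mu_1(\omega)=\sum_{k=0}^{n-j}\binom{n-j}{k}B_k=B_{n-j}(1)$; that is, the Volkenborn integral of a unit-shifted monomial returns the Bernoulli polynomial evaluated at $1$. Substituting these values produces the intermediate identity $\sum_{j=1}^{n}(-1)^{j+1}\binom{n}{j}j(\rho-1)^{-j}A_{j-1}(\rho)(\rho B_{n-j}(1)-B_{n-j})=nB_{n-1}$.

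To match the stated form I would finally expand $B_{n-j}(1)$ through the addition formula $B_{n-j}(1)=B_{n-j}+\sum_{k=0}^{n-j-1}\binom{n-j}{k}B_k=\sum_{k=0}^{n-j}\binom{n-j}{k}B_k$, keeping the standalone $-B_{n-j}$ untouched, so that the bracket becomes $\rho\sum_{k=0}^{n-j}\binom{n-j}{k}B_k-B_{n-j}$ and the displayed equation follows verbatim. Given (\ref{epi}), the argument is essentially mechanical and I foresee no serious obstacle; the only points deserving care are the termwise interchange of the finite sum with the integral (pure linearity) and the identification $\int_{\mathbb{Z}_p}(\omega+1)^{n-j}d\mu_1(\omega)=B_{n-j}(1)$. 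As a sanity check I would verify the boundary term $j=n$, where $n-j=0$ forces the bracket to $\rho B_0(1)-B_0=\rho-1$, consistent with the constant $j=n$ contribution in (\ref{epi}).
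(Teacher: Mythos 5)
Your proposal is correct and follows essentially the same route as the paper: apply the Volkenborn integral to (\ref{epi}), use the Witt identity (\ref{bv}) termwise (with the binomial expansion of $(\omega+1)^{n-j}$ giving $B_{n-j}(1)$), and then rewrite $B_{n-j}(1)=\sum_{k=0}^{n-j}\binom{n-j}{k}B_{k}$ to reach the stated form. No gaps.
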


By applying the Ferminoic $p$-adic integral (\ref{Fi}) to (\ref{epi}) and
using%
\begin{equation*}
	\int\limits_{\mathbb{Z}_{p}}\omega ^{k}d\mu _{-1}\left( \omega \right)
	=E_{k},
\end{equation*}%
which is known as the Witt identity for the  Euler numbers, we get
\begin{equation*}
	\sum\limits_{j=1}^{n}(-1)^{j+1}\binom{n}{j}j\left( \rho -1\right)
	^{-j}A_{j-1}(\rho )\left( \rho E_{n-j}(1)-E_{n-j}\right) =nE_{n-1}.
\end{equation*}%
Comparing the above equation with%
\begin{equation*}
	E_{n-j}(1)=E_{n-j}+\sum\limits_{k=0}^{n-j-1}\binom{n-j}{k}E_{k},
\end{equation*}%
we arrive at the following theorem:

\begin{theorem}
	Let $n\in \mathbb{N}$. Then we have%
	\begin{equation*}
		\sum\limits_{j=1}^{n}(-1)^{j+1}\binom{n}{j}j\left( \rho -1\right)
		^{-j}A_{j-1}(\rho )\left( \rho \sum\limits_{k=0}^{n-j}\binom{n-j}{k}%
		E_{k}-E_{n-j}\right) =nE_{n-1}.
	\end{equation*}
\end{theorem}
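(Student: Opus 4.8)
The plan is to start from the polynomial identity (\ref{epi}), which holds for every real $\omega$, and apply the fermionic $p$-adic integral (\ref{Fi}) termwise in the variable $\omega$. Since the outer sum over $j$ runs over the finite range $1\le j\le n$ and the integral is linear, the integral can be moved inside the sum without any convergence concern; the coefficients $(-1)^{j+1}\binom{n}{j}j(\rho-1)^{-j}A_{j-1}(\rho)$ are constants with respect to $\omega$ and pass through untouched.

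First I would integrate the right-hand side: by the Witt identity for the Euler numbers, $\int_{\mathbb{Z}_{p}}\omega^{n-1}\,d\mu_{-1}(\omega)=E_{n-1}$, so $\int_{\mathbb{Z}_{p}}n\omega^{n-1}\,d\mu_{-1}(\omega)=nE_{n-1}$, producing the right side of the claimed identity. Next I would integrate each factor $\rho(\omega+1)^{n-j}-\omega^{n-j}$ on the left. The term $\omega^{n-j}$ gives $E_{n-j}$ directly from the Witt identity, while $(\omega+1)^{n-j}$ requires the shifted Witt formula $\int_{\mathbb{Z}_{p}}(\omega+1)^{n-j}\,d\mu_{-1}(\omega)=E_{n-j}(1)$, i.e. the fact that the fermionic integral of $(\omega+x)^{m}$ reproduces the Euler polynomial $E_{m}(x)$. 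This yields the intermediate identity in which the bracketed factor reads $\rho E_{n-j}(1)-E_{n-j}$.

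Finally I would rewrite $E_{n-j}(1)$ using the additive relation for the Euler polynomials, $E_{n-j}(1)=E_{n-j}+\sum_{k=0}^{n-j-1}\binom{n-j}{k}E_{k}=\sum_{k=0}^{n-j}\binom{n-j}{k}E_{k}$, where the last equality merely reabsorbs the $k=n-j$ term $\binom{n-j}{n-j}E_{n-j}=E_{n-j}$. Substituting this into $\rho E_{n-j}(1)$ gives $\rho\sum_{k=0}^{n-j}\binom{n-j}{k}E_{k}$, which is exactly the bracketed expression in the statement, completing the proof.

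The main obstacle is conceptual rather than computational: one must recognize that the fermionic integral of the shifted power $(\omega+1)^{n-j}$ is the Euler polynomial evaluated at $1$, not merely the Euler number $E_{n-j}$. Everything else is a mechanical application of linearity together with the two standard Witt identities and the elementary binomial expansion of $E_{n-j}(1)$. This argument mirrors the structure of the preceding theorem exactly, with the fermionic integral and the Witt identity for the Euler numbers playing the roles that the Volkenborn integral (\ref{Vi}) and the Witt identity (\ref{bv}) for the Bernoulli numbers played there.
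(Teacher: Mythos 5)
Your proposal is correct and follows exactly the paper's own argument: apply the fermionic $p$-adic integral to (\ref{epi}), use the Witt identity $\int_{\mathbb{Z}_{p}}\omega^{k}d\mu_{-1}(\omega)=E_{k}$ together with $\int_{\mathbb{Z}_{p}}(\omega+1)^{n-j}d\mu_{-1}(\omega)=E_{n-j}(1)$, and then expand $E_{n-j}(1)=\sum_{k=0}^{n-j}\binom{n-j}{k}E_{k}$. No substantive difference from the paper's proof.
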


\section{Identities, Relations and Series Representations for the Uniform $B$%
	-Splines and the Bernstein Basis Functions} \label{section3}

In this section, we give many new formulas involving uniform $B$-spline,
Apostol-Bernoulli numbers and Eulerian numbers. We also give some functional
equations and derivative equations for generating functions for the uniform $%
B$-splines and the Bernstein basis functions. Using these equations and the
Laplace transform, we study series representations for the uniform $B$%
-splines and the Bernstein basis functions.

\subsection{Relations among the uniform $B$-spline, Apostol-Bernoulli numbers
	and Eulerian numbers}

Here, by using $p$-adic integrals, we introduce very interesting results
among $B$-Spline, the Apostol-Bernoulli numbers and the Eulerian numbers.

Let's briefly give these interesting relationships as follows:

Combining (\ref{aEi}) and (\ref{Bs}) with (\ref{AnPl}) and (\ref{EnFor}), we
arrive at the following result:

\begin{theorem}
	Let $n\in \mathbb{N}_{0}$. Then we have%
	\begin{equation*}
		\mathcal{B}_{n+1}(\rho )=(-1)^{n}\left( \rho -1\right)
		^{n+1}(n+1)!\sum\limits_{j=0}^{n}N_{0,n}(j;j)\rho ^{j}.
	\end{equation*}
\end{theorem}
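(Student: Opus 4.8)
The plan is to read the statement as a direct consequence of the Apostol--Bernoulli/Eulerian link (\ref{aEi}), once the Eulerian numbers have been re-expressed through the uniform $B$-spline. First I would take (\ref{aEi}) with $n$ replaced by $n+1$, so that it reads $A_{n}(\rho)=-\frac{(1-\rho)^{n+1}}{n+1}\mathcal{B}_{n+1}(\rho)$, and solve it for the Apostol--Bernoulli number, obtaining $\mathcal{B}_{n+1}(\rho)$ as an explicit constant multiple of $(1-\rho)^{-(n+1)}A_{n}(\rho)$. After this rearrangement the entire problem reduces to producing a closed form for $A_{n}(\rho)$ in which the coefficients are the diagonal spline values $N_{0,n}(j;j)$.

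The decisive step, and the only one carrying genuine content, is a combinatorial identification between the Eulerian numbers and the uniform $B$-spline. Writing the Eulerian number from (\ref{EnFor}) as $A_{n,j}=\sum_{v=0}^{j}(-1)^{v}\binom{n+1}{v}(j-v)^{n}$ and comparing it with Schoenberg's identity (\ref{Bs}) specialised to the diagonal point $\omega=p=j$, namely
\[
N_{0,n}(j;j)=\frac{1}{n!}\sum_{v=0}^{j}(-1)^{v}\binom{n+1}{v}(j-v)^{n},
\]
I would read off the clean relation $A_{n,j}=n!\,N_{0,n}(j;j)$ for every $0\le j\le n$. Substituting this into the expansion (\ref{AnPl}) of $A_{n}(\rho)$ then yields
\[
A_{n}(\rho)=\sum_{j=0}^{n}A_{n,j}\rho^{j}=n!\sum_{j=0}^{n}N_{0,n}(j;j)\rho^{j}.
\]

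Finally I would insert this expression for $A_{n}(\rho)$ into the solved form of (\ref{aEi}) and collect constants: the factor $n!$ combines with the $n+1$ from (\ref{aEi}) to give $(n+1)!$, while the power $(1-\rho)^{-(n+1)}$ is rewritten via $(1-\rho)^{n+1}=(-1)^{n+1}(\rho-1)^{n+1}$, which is where the overall sign $(-1)^{n}$ is produced. Assembling these pieces delivers $\mathcal{B}_{n+1}(\rho)$ as $(-1)^{n}(n+1)!$ times the appropriate power of $(\rho-1)$ times $\sum_{j=0}^{n}N_{0,n}(j;j)\rho^{j}$.

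The part I would treat most carefully is this last bookkeeping rather than the identification, which is exact: inverting (\ref{aEi}) brings in $(1-\rho)^{-(n+1)}=(-1)^{n+1}(\rho-1)^{-(n+1)}$, so one must track whether the power of $(\rho-1)$ lands in the numerator or the denominator and whether a stray factor of $(-1)$ survives, since that is the likeliest source of error. Before declaring the formula proved I would cross-check the resulting constant against a small case, for example $n=2$, where $N_{0,2}(0;0)=0$ and $N_{0,2}(1;1)=N_{0,2}(2;2)=\tfrac12$ give $\sum_{j}N_{0,2}(j;j)\rho^{j}=\tfrac12\rho(\rho+1)$, to be matched against the known value of $\mathcal{B}_{3}(\rho)$ computed from (\ref{B1}) or directly from (\ref{ApstlBernNumbr}).
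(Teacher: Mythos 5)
Your proposal is correct and is precisely the paper's own (one-line) argument: combine (\ref{aEi}) with the identification $A_{n,j}=n!\,N_{0,n}(j;j)$ obtained by comparing Schoenberg's identity (\ref{Bs}) at $\omega=p=j$ with (\ref{EnFor}), and then substitute into (\ref{AnPl}). The bookkeeping you rightly flag is worth carrying out: it yields $\mathcal{B}_{n+1}(\rho)=(-1)^{n}(n+1)!\,(\rho-1)^{-(n+1)}\sum_{j=0}^{n}N_{0,n}(j;j)\rho^{j}$, so the power of $(\rho-1)$ belongs in the denominator, and your suggested small-case check (e.g.\ $n=1$, where $\mathcal{B}_{2}(\rho)=-2\rho/(\rho-1)^{2}$ while the sum equals $\rho$) confirms that the exponent $n+1$ in the displayed theorem is a misprint for $-(n+1)$.
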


A relation between the uniform $B$-Spline and the Eulerian numbers is also given by
\begin{equation}
	N_{0,n}(p;p)=\frac{1}{n!}A_{n,p}.  \label{Bs3}
\end{equation}

\subsection{Relations between the uniform $B$-spline and the Bersntein basis
	functions}

By using generating functions and their functional equations and PDEs,
relations between the uniform $B$-spline and the Bersntein basis functions are
given.

\begin{theorem}
	\label{TheBS}Let $n,p\in \mathbb{N}_{0}$ and $p\leq \omega\leq p+1$. Then we have%
	\begin{equation}
		N_{0,n}(\omega;p)=\frac{1}{n!}\sum\limits_{v=0}^{n}(-1)^{v}\binom{n}{v}%
		\sum\limits_{j=0}^{p}\left( B_{j}^{v}(\omega-j)-B_{j-1}^{v}(\omega-j)\right) .
		\label{Bs-2}
	\end{equation}
\end{theorem}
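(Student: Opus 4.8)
The plan is to prove the identity by showing that both sides, viewed as power series in $t$, are generated by the same function, namely Goldman's generating function $G_0(\omega,t;p)$ from (\ref{G-1}). Since by (\ref{G-1}) the left-hand side $N_{0,n}(\omega;p)$ is exactly the coefficient of $t^n$ in $G_0(\omega,t;p)$, it suffices to multiply the right-hand side by $t^n$, sum over $n\ge 0$, and check that the result reproduces $G_0(\omega,t;p)$; comparing coefficients of $t^n$ then closes the argument.

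First I would dispose of the outer operator $\frac{1}{n!}\sum_{v=0}^{n}(-1)^v\binom{n}{v}(\cdot)$. Writing $c_v=\sum_{j=0}^{p}\left(B_j^v(\omega-j)-B_{j-1}^v(\omega-j)\right)$, this operator is a Cauchy product against the exponential series:
\[
\sum_{n=0}^{\infty}\left(\frac{1}{n!}\sum_{v=0}^{n}(-1)^v\binom{n}{v}c_v\right)t^n=\left(\sum_{v=0}^{\infty}c_v\frac{(-t)^v}{v!}\right)e^{t}.
\]
The key step is to evaluate the inner exponential generating functions via the Bernstein generating function (\ref{BBf}). Replacing $t$ by $-t$ in (\ref{BBf}) and dividing by $d!$ gives $\sum_{v}B_d^v(x)\frac{(-t)^v}{v!}=\frac{(-1)^d(xt)^d}{d!}e^{-t(1-x)}$. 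Applying this once with $d=j$, $x=\omega-j$ and once with $d=j-1$, $x=\omega-j$ yields closed forms for $\sum_v B_j^v(\omega-j)\frac{(-t)^v}{v!}$ and $\sum_v B_{j-1}^v(\omega-j)\frac{(-t)^v}{v!}$, both carrying the common factor $e^{-t(1-\omega+j)}$.

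Substituting back and simplifying, the two pieces for each $j$ combine (after factoring out $(-1)^j$, since $-(-1)^{j-1}=(-1)^j$) into $\frac{(\omega-j)^j t^j}{j!}+\frac{(\omega-j)^{j-1}t^{j-1}}{(j-1)!}$, while the surviving $e^{t}$ multiplies $e^{-t(1-\omega+j)}$ to collapse to $e^{(\omega-j)t}$. The resulting sum over $j$ matches (\ref{G-1}) term by term, so the generating function of the right-hand side is precisely $G_0(\omega,t;p)$, and equating coefficients of $t^n$ gives the theorem.

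The main obstacle is purely bookkeeping at the boundary index $j=0$, where $B_{j-1}^v=B_{-1}^v$ and the term $\frac{(\omega-j)^{j-1}t^{j-1}}{(j-1)!}$ formally involves $1/(-1)!$; I would adopt the standard conventions $B_{-1}^v\equiv 0$ and $1/(-1)!=0$, which are exactly those already built into (\ref{G-1}), so that the $j=0$ contribution reduces consistently to $e^{\omega t}$. As an independent check, the identity also follows directly from Schoenberg's formula (\ref{Bs}): using (\ref{cBBF}) together with $\binom{n}{v}\binom{v}{j}=\binom{n}{j}\binom{n-j}{v-j}$ and the binomial theorem gives $\sum_{v=0}^{n}(-1)^v\binom{n}{v}B_j^v(\omega-j)=(-1)^j\binom{n}{j}(\omega-j)^n$, the analogous evaluation holds for the $B_{j-1}$ sum, and Pascal's rule $\binom{n}{j}+\binom{n}{j-1}=\binom{n+1}{j}$ then recovers (\ref{Bs}) exactly.
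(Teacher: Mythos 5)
Your proposal is correct and follows essentially the same route as the paper: it establishes the functional equation $G_{0}(\omega,t;p)=\sum_{j=0}^{p}\left( f_{\mathbb{B},j}(-t,\omega-j)-f_{\mathbb{B},j-1}(-t,\omega-j)\right) e^{t}$, expands via the Cauchy product against $e^{t}$, and compares coefficients of $t^{n}$. The only differences are cosmetic improvements: you actually verify the functional equation term by term (the paper merely asserts it), and you add an independent cross-check via Schoenberg's identity (\ref{Bs}).
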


\begin{proof}
	Setting the following functional equation:%
	\begin{equation}
		G_{0}(\omega,t;p)=\sum\limits_{j=0}^{p}\left( f_{\mathbb{B},j}(-t,\omega-j)-f_{\mathbb{%
				B},j-1}(-t,\omega-j)\right) e^{t}.  \label{Bs-1}
	\end{equation}%
	Combining (\ref{Bs-1}) with (\ref{BBf}) and (\ref{G-1}), we obtain%
	\begin{equation*}
		\sum\limits_{n=0}^{\infty
		}N_{0,n}(\omega;p)t^{n}=\sum\limits_{j=0}^{p}(-1)^{j}\sum\limits_{n=0}^{\infty
		}\left( B_{j}^{n}(\omega-j)-B_{j-1}^{n}(\omega-j)\right) \frac{t^{n}}{n!}%
		\sum\limits_{n=0}^{\infty }\frac{t^{n}}{n!}.
	\end{equation*}%
	Using the Cauchy product rule in the above equation, we obtain%
	\begin{equation*}
		\sum\limits_{n=0}^{\infty }N_{0,n}(\omega;p)t^{n}=\sum\limits_{n=0}^{\infty }%
		\frac{t^{n}}{n!}\sum\limits_{v=0}^{n}(-1)^{v}\binom{n}{v}\sum%
		\limits_{j=0}^{p}\left( B_{j}^{v}(\omega-j)-B_{j-1}^{v}(\omega-j)\right) .
	\end{equation*}%
	Now, by comparing the coefficients of $t^{n}$ on both sides of the above
	equation, we arrive at the assertion (\ref{Bs-2}) of Theorem \ref{TheBS}.
\end{proof}

\begin{remark}
	Equation \textup{(\ref{Bs-2})} gives us modification the Schoenbergs identity. This identity was proved by Goldman \textup{\cite[Theorem 3]{Goldman}} with the aid of generating function method.
\end{remark}

\begin{theorem}
	Let $n,p,v\in \mathbb{N}_{0}$ and $p\leq \omega\leq p+1$. Then we have%
	\begin{eqnarray}
		\frac{d^{v}}{d\omega^{v}}\left\{ N_{0,n}(\omega;p)\right\} &=&\sum\limits_{d=0}^{n}%
		\binom{n}{d}\binom{d}{v}v!\sum\limits_{m=0}^{v}(-1)^{v+n-d-m}\binom{v}{m}
		\label{A-feI} \\
		&&\times \sum\limits_{j=0}^{p}\left(
		B_{j-m}^{d-v}(\omega-j)-B_{j-m-1}^{d-v}(\omega-j)\right)  \notag
	\end{eqnarray}
\end{theorem}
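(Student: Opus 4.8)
The plan is to differentiate the functional equation that produced the previous theorem, namely the identity for $N_{0,n}(\omega;p)$ obtained by expanding $G_0(\omega,t;p)$ via (\ref{Bs-1}), but now track the $v$-th derivative in $\omega$. First I would write
\begin{equation*}
\frac{d^{v}}{d\omega^{v}}\left\{ N_{0,n}(\omega;p)\right\}
=\frac{1}{n!}\sum_{j=0}^{p}(-1)^{j}
\frac{d^{v}}{d\omega^{v}}\left\{ B_{j}^{n}(\omega-j)-B_{j-1}^{n}(\omega-j)\right\},
\end{equation*}
so the problem reduces to computing $\frac{d^{v}}{d\omega^{v}}B_{d}^{n}(\omega)$ for the Bernstein basis functions. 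Here I would exploit the explicit form (\ref{cBBF}), writing $B_{d}^{n}(\omega)=\binom{n}{d}\omega^{d}(1-\omega)^{n-d}$ and applying the Leibniz product rule to the two factors $\omega^{d}$ and $(1-\omega)^{n-d}$.

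The key computation is the $v$-th derivative of the product. By Leibniz,
\begin{equation*}
\frac{d^{v}}{d\omega^{v}}\left\{\omega^{d}(1-\omega)^{n-d}\right\}
=\sum_{m=0}^{v}\binom{v}{m}
\frac{d^{m}}{d\omega^{m}}\{\omega^{d}\}\,
\frac{d^{v-m}}{d\omega^{v-m}}\{(1-\omega)^{n-d}\},
\end{equation*}
and each monomial derivative introduces a falling factorial and, for $(1-\omega)^{n-d}$, a sign $(-1)^{v-m}$. The goal is to reorganize the resulting falling factorials and binomial coefficients so that each summand is again proportional to a Bernstein basis function $B_{j-m}^{d-v}(\omega-j)$ (with the index shifts $n\mapsto d-v$ on the upper slot coming from the two lowered powers, and the lower-slot shift $j\mapsto j-m$ coming from the split of the $v$ derivatives between the two factors). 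The bookkeeping of binomial coefficients is where I expect the main obstacle: one must verify the identity relating $\binom{n}{d}$ times the falling-factorial prefactors to the combination $\binom{n}{d}\binom{d}{v}v!\binom{v}{m}$ appearing in (\ref{A-feI}), and confirm that the residual power $\omega^{d-m}(1-\omega)^{n-d-v+m}$ assembles exactly into $B_{j-m}^{d-v}(\omega-j)$ after substituting $\omega-j$ for the argument.

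After establishing the single-term derivative formula $\frac{d^{v}}{d\omega^{v}}B_{d}^{n}(\omega) = \sum_{m=0}^{v}(\cdots)B_{d-m}^{n-v}(\omega)$, I would substitute back into the differentiated Schoenberg-type identity and collect the sign factors. The $(-1)^{j}$ from (\ref{G-1}), the $(-1)^{v-m}$ from differentiating $(1-\omega)^{n-d}$, and the index shift between $B_{j}^{n}$ and $B_{j-1}^{n}$ must be combined to yield the prefactor $(-1)^{v+n-d-m}$ recorded in (\ref{A-feI}). Finally I would sum over $d$ from $0$ to $n$ (with $\binom{n}{d}$) to reindex the single basis-function slot into the double sum over $d$ and $m$, and compare coefficients to read off (\ref{A-feI}). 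The only genuinely delicate point is tracking the sign and the collapse of falling factorials into the clean product $\binom{n}{d}\binom{d}{v}v!\binom{v}{m}$; once that is confirmed the rest is assembly and reindexing.
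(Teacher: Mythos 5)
Your plan breaks down at the very first line: the identity you propose to differentiate,
\begin{equation*}
N_{0,n}(\omega;p)=\frac{1}{n!}\sum_{j=0}^{p}(-1)^{j}\left( B_{j}^{n}(\omega-j)-B_{j-1}^{n}(\omega-j)\right),
\end{equation*}
is not what expanding $G_{0}(\omega,t;p)$ via (\ref{Bs-1}) gives, and it is in fact false: for $n=1$, $p=0$ it yields $B_{0}^{1}(\omega)=1-\omega$, whereas $N_{0,1}(\omega;0)=\omega$. The identity that (\ref{Bs-1}) actually produces is Theorem \ref{TheBS}, i.e.\ (\ref{Bs-2}), which carries an additional convolution sum $\sum_{d=0}^{n}(-1)^{d}\binom{n}{d}(\cdots)$ over the \emph{degree} of the Bernstein basis functions. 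That sum is forced by the factor $e^{t}$ in (\ref{Bs-1}) (the exponent $(\omega-j)t$ in (\ref{G-1}) is $-t\bigl(1-(\omega-j)\bigr)+t$, so a Cauchy product with $e^{t}$ is unavoidable), and it is precisely the source of the outer $\sum_{d=0}^{n}\binom{n}{d}$ in (\ref{A-feI}). Your closing step --- ``sum over $d$ from $0$ to $n$ (with $\binom{n}{d}$) to reindex the single basis-function slot into the double sum'' --- is not a legitimate operation: no reindexing of a single sum over $j$ can manufacture that convolution. Relatedly, the $(-1)^{j}$ you track does not survive into (\ref{A-feI}) at all; the signs there come from the $(-1)^{d}$ of the convolution and the $(-1)^{v-m}$ of the derivative formula.

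The rest of your plan is sound once the starting point is repaired, and it then gives a route that is arguably more direct than the paper's. Your Leibniz computation on $\omega^{d}(1-\omega)^{n-d}$ is exactly the higher-order derivative formula
\begin{equation*}
\frac{d^{v}}{d\omega^{v}}\left\{ B_{j}^{d}(\omega)\right\} =\frac{d!}{(d-v)!}\sum_{m=0}^{v}(-1)^{v-m}\binom{v}{m}B_{j-m}^{d-v}(\omega),
\end{equation*}
which the paper simply imports from \cite{SimsekFPABERNST}; the prefactor $\frac{d!}{(d-v)!}=\binom{d}{v}v!$ is where $\binom{d}{v}v!$ in (\ref{A-feI}) comes from. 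Applying this term by term to (\ref{Bs-2}) immediately yields the right-hand side of (\ref{A-feI}) up to an overall factor $\frac{(-1)^{n}}{n!}$ (a discrepancy that is also present between the paper's own final displayed equation and its stated theorem, so do not let it persuade you your bookkeeping is wrong). The paper instead works at the level of generating functions: it differentiates the functional equation (\ref{A-fe}) $v$ times in $\omega$ using a PDE for $f_{\mathbb{B},j}$ and then re-extracts coefficients through a Cauchy product with $e^{-z}$. Your coefficient-level route avoids that second extraction entirely --- but only if you start from the correct identity (\ref{Bs-2}).
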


\begin{proof}
	Substituting $t=-z$ into (\ref{Bs-1}), we have%
	\begin{equation}
		G_{0}(\omega,-z;p)=\sum\limits_{j=0}^{p}\left( f_{\mathbb{B},j}(z,\omega-j)-f_{\mathbb{%
				B},j-1}(z,\omega-j)\right) e^{-z}.  \label{A-fe}
	\end{equation}%
	Taking $v$th derivative of equation (\ref{A-fe}), with respect to $\omega$, we
	have the following PDE equation:%
	\begin{equation*}
		\frac{\partial ^{v}}{\partial \omega^{v}}\left\{ G_{0}(\omega,-z;p)\right\}
		=\sum\limits_{j=0}^{p}\left( \frac{\partial ^{v}}{\partial \omega^{v}}\left\{ f_{%
			\mathbb{B},j}(z,\omega-j)\right\} -\frac{\partial ^{v}}{\partial \omega^{v}}\left\{ f_{%
			\mathbb{B},j-1}(z,\omega-j)\right\} \right) e^{-z}.
	\end{equation*}%
	Combining the above equation with the following PDE for $f_{\mathbb{B}%
		,j}(t,\omega)$:%
	\begin{equation*}
		\frac{\partial ^{v}}{\partial \omega^{v}}\left\{ f_{\mathbb{B},j}(z,\omega)\right\}
		=\sum\limits_{m=0}^{v}(-1)^{v-m}\binom{v}{m}z^{v}f_{\mathbb{B},j-m}(z,\omega)
	\end{equation*}%
	(cf. \cite[Eq. (15)]{SimsekFPABERNST}), we obtain the following PDE
	for the generating function $G_{0}(\omega,-z;p)$:%
	\begin{equation*}
		\frac{\partial ^{v}}{\partial \omega^{v}}\left\{ G_{0}(\omega,-z;p)\right\}
		=\sum\limits_{j=0}^{p}\sum\limits_{m=0}^{v}(-1)^{v-m}\binom{v}{m}z^{v}\left(
		f_{\mathbb{B},j-m}(z,\omega-j)-f_{\mathbb{B},j-m-1}(z,\omega-j)\right) e^{-z}.
	\end{equation*}%
	Therefore%
	\begin{eqnarray*}
		\frac{\partial ^{v}}{\partial \omega^{v}}\left\{ G_{0}(\omega,-z;p)\right\}
		&=&\sum\limits_{j=0}^{p}\sum\limits_{m=0}^{v}(-1)^{v-m}\binom{v}{m} \\
		&&\times \left( f_{\mathbb{B},j-m}(z,\omega-j)-f_{\mathbb{B},j-m-1}(z,\omega-j)\right)
		z^{v}e^{-z}.
	\end{eqnarray*}%
	Combining the above equation with the following formula of the higher-order
	derivatives of the Bernstein basis function:%
	\begin{equation*}
		\frac{d^{v}}{d\omega^{v}}\left\{ B_{j}^{n}(\omega)\right\} =\frac{n!}{(n-v)!}%
		\sum\limits_{m=0}^{v}(-1)^{v-m}\binom{v}{m}B_{j-m}^{n-v}(\omega)
	\end{equation*}%
	(cf. \cite[Eq. (16)]{SimsekFPABERNST}), we obtain%
	\begin{eqnarray*}
		\frac{\partial ^{v}}{\partial \omega^{v}}\left\{ G_{0}(\omega,-z;p)\right\}
		=\sum\limits_{j=0}^{p}\sum\limits_{m=0}^{v}(-1)^{v-m}\binom{v}{m}z^{v}  \left( f_{\mathbb{B},j-m}(z,\omega-j)-f_{\mathbb{B},j-m-1}(z,\omega-j)\right)
		e^{-z}.
	\end{eqnarray*}%
	Therefore%
	\begin{eqnarray*}
		\sum\limits_{n=0}^{\infty }(-1)^{n}\frac{d^{v}}{d\omega^{v}}\left\{
		N_{0,n}(\omega;p)\right\} z^{n} 	&=&\sum\limits_{n=0}^{\infty }(-1)^{n}\frac{z^{n}}{n!}\sum\limits_{n=0}^{%
			\infty }\binom{n}{v}v!\sum\limits_{m=0}^{v}(-1)^{v-m}\binom{v}{m} \\
		&&\times \sum\limits_{j=0}^{p}\left(
		B_{j-m}^{n-v}(\omega-j)-B_{j-m-1}^{n-v}(\omega-j)\right) \frac{z^{n}}{n!}.
	\end{eqnarray*}%
	By using the Cauchy product rule in the above equation, we obtain%
	\begin{eqnarray*}
		\sum\limits_{n=0}^{\infty }(-1)^{n}\frac{d}{d\omega}\left\{
		N_{0,n}(\omega;p)\right\} z^{n}&=&\sum\limits_{n=0}^{\infty
		}\sum\limits_{d=0}^{n}(-1)^{n-d}\binom{n}{d}\binom{d}{v}v! \\
		&&\times \sum\limits_{m=0}^{v}(-1)^{v-m}\binom{v}{m}\sum\limits_{j=0}^{p}%
		\left( B_{j-m}^{d-v}(\omega-j)-B_{j-m-1}^{d-v}(\omega-j)\right) \frac{z^{n}}{n!}.
	\end{eqnarray*}%
	Comparing the coefficients of $z^{n}$ on both sides of the above equation,
	we arrive at the assertion (\ref{A-feI}) of Theorem.
\end{proof}

\begin{remark}
	Substituting $v=1$ into \textup{(\ref{A-feI})}, we get Theorem 4, which was proved by Goldman \textup{\cite[Theorem 4]{Goldman}}.
\end{remark}

\begin{theorem}
	Let $n,p,v\in \mathbb{N}_{0}$ and $p\leq \omega\leq p+1$. Then we have%
	\begin{eqnarray}
		N_{0,n+v}(\omega;p) &=&\frac{1}{n!(n+v)_{v}}\sum\limits_{d=0}^{n}(-1)^{v+d}\binom{%
			n}{d}\sum\limits_{m=0}^{v}(-1)^{v-m}\binom{v}{m}  \label{G-1id} \\
		&&\times \sum\limits_{j=0}^{p}\sum\limits_{c=0}^{m}B_{c}^{m}(\omega-j)\left(
		B_{j-c}^{d}(\omega-j)-B_{j-c-1}^{d}(\omega-j)\right) .  \notag
	\end{eqnarray}
\end{theorem}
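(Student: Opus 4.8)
The plan is to obtain (\ref{G-1id}) as a consequence of the modified Schoenberg identity (\ref{Bs-2}) of Theorem \ref{TheBS}, evaluated at order $n+v$, by factoring the binomial coefficient that appears there through two applications of Vandermonde's convolution: one for the binomials and one for the Bernstein basis functions themselves. First I would write (\ref{Bs-2}) with $n$ replaced by $n+v$, namely
\begin{equation*}
N_{0,n+v}(\omega;p)=\frac{1}{(n+v)!}\sum_{V=0}^{n+v}(-1)^{V}\binom{n+v}{V}\sum_{j=0}^{p}\left(B_{j}^{V}(\omega-j)-B_{j-1}^{V}(\omega-j)\right),
\end{equation*}
and then split the upper index as $V=d+m$ with $0\le d\le n$ and $0\le m\le v$, using $\binom{n+v}{V}=\sum_{d+m=V}\binom{n}{d}\binom{v}{m}$. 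This converts the single sum over $V$ into the double sum over $d$ and $m$ of the statement, and turns the sign $(-1)^{V}=(-1)^{d+m}$ into $(-1)^{v+d}(-1)^{v-m}$.

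The heart of the argument is then the Bernstein degree-elevation (convolution) identity
\begin{equation*}
B_{j}^{d+m}(\omega-j)=\sum_{c=0}^{m}B_{c}^{m}(\omega-j)B_{j-c}^{d}(\omega-j),
\end{equation*}
together with its shifted form $B_{j-1}^{d+m}(\omega-j)=\sum_{c=0}^{m}B_{c}^{m}(\omega-j)B_{j-c-1}^{d}(\omega-j)$; both reduce to $\sum_{c}\binom{m}{c}\binom{d}{j-c}=\binom{m+d}{j}$, i.e.\ Vandermonde's identity again, and can be realized in the paper's generating-function style from $\sum_{c}B_{c}^{m}(\omega)x^{c}=(1-\omega+\omega x)^{m}$, whose product with $(1-\omega+\omega x)^{d}$ yields the convolution as the coefficient of $x^{j}$ in $(1-\omega+\omega x)^{m+d}$. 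Substituting these two expansions for $B_{j}^{d+m}$ and $B_{j-1}^{d+m}$ produces precisely the inner $\sum_{c}$ of (\ref{G-1id}), and writing $(n+v)!=n!\,(n+v)_{v}$ delivers the prefactor $\frac{1}{n!\,(n+v)_{v}}$. As a consistency check one sees immediately that at $v=0$ the whole construction collapses back to (\ref{Bs-2}).

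The step I expect to be the main obstacle is the index and boundary accounting in the Bernstein convolution. One must verify that extending the inner sum to $c=0,\dots,m$ is harmless because $B_{j-c}^{d}(\omega-j)$ and $B_{j-c-1}^{d}(\omega-j)$ vanish outside the admissible range $0\le j-c\le d$ under the convention $B_{d}^{k}=0$ for $d>k$ (and for negative lower index), and that the identity holds uniformly in $j$ including the endpoints $j=0$, where the shifted term $B_{j-1}^{\,\cdot}$ is zero, and $j=p$. Carrying out the two Vandermonde reindexings simultaneously while correctly tracking the signs $(-1)^{v+d}(-1)^{v-m}$ is where an error is most likely to enter, so I would validate the final formula against the case $v=1$ (which should recover the content of (\ref{A-feI})) and against the tabulated values of $N_{0,n}(\omega;p)$ listed after (\ref{Bs}).
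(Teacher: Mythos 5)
Your argument is correct, but it is not the route the paper takes. The paper proves (\ref{G-1id}) by applying Leibniz's rule for the $v$th derivative with respect to $z$ to the functional equation (\ref{A-fe}), using the known derivative formula $\frac{\partial ^{m}}{\partial z^{m}}\{ f_{\mathbb{B},j}(z,\omega)\}=\sum_{c=0}^{m}B_{c}^{m}(\omega)f_{\mathbb{B},j-c}(z,\omega)$ to obtain the intermediate identity (\ref{G-1D}), and then expanding both sides via (\ref{BBf}), taking a Cauchy product with $e^{-z}$, and comparing coefficients of $z^{n}$; the shift from $N_{0,n}$ to $N_{0,n+v}$ and the factor $(n+v)_{v}$ come from differentiating the power series $\sum_n(-1)^nN_{0,n}(\omega;p)z^n$ $v$ times. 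You instead treat (\ref{G-1id}) as a purely combinatorial refactoring of the already-established identity (\ref{Bs-2}) of Theorem \ref{TheBS} at order $n+v$: one Vandermonde convolution to split $\binom{n+v}{V}$ into $\sum\binom{n}{d}\binom{v}{m}$, and the Bernstein degree-elevation identity $B_{j}^{d+m}(x)=\sum_{c=0}^{m}B_{c}^{m}(x)B_{j-c}^{d}(x)$ (Vandermonde again, valid for all $j$ under the vanishing conventions, as you note) to produce the inner sum over $c$. I checked the bookkeeping: $(-1)^{d+m}=(-1)^{v+d}(-1)^{v-m}$, $(n+v)!=n!\,(n+v)_{v}$, and the $v=0$ collapse to (\ref{Bs-2}) all work out, so your proof is complete given Theorem \ref{TheBS}. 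What each approach buys: yours is shorter, avoids generating functions and Cauchy products entirely, and makes transparent that (\ref{G-1id}) carries no analytic content beyond (\ref{Bs-2}) plus two binomial convolutions; the paper's derivation is longer but self-contained at the level of generating functions, fits the article's stated methodology, and yields the derivative identity (\ref{G-1D}) for $\frac{\partial ^{v}}{\partial z^{v}}\{G_{0}(\omega,-z;p)\}$ as a byproduct of independent interest.
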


\begin{proof}
	By applying the Leibnitz's formula for the $v$th derivative, with respect to $z$%
	, to equation (\ref{A-fe}), with the help of the following PDE for $f_{%
		\mathbb{B},j}(z,\omega)$:%
	\begin{equation*}
		\frac{\partial ^{m}}{\partial z^{m}}\left\{ f_{\mathbb{B},j}(z,\omega)\right\}
		=\sum\limits_{c=0}^{m}B_{c}^{m}(\omega)f_{\mathbb{B},j-c}(z,\omega)
	\end{equation*}%
	(cf. \cite[Eq. (18)]{SimsekFPABERNST}), we get the following PDE
	equation:%
	\begin{eqnarray}
		\frac{\partial ^{v}}{\partial z^{v}}\left\{ G_{0}(\omega,-z;p)\right\}
		&=&e^{-z}\sum\limits_{j=0}^{p}\sum\limits_{m=0}^{v}(-1)^{v-m}\binom{v}{m}
		\label{G-1D} \\
		&&\times \sum\limits_{c=0}^{m}B_{c}^{m}(\omega)\left( f_{\mathbb{B}%
			,j-c}(z,\omega-j)-f_{\mathbb{B},j-c-1}(z,\omega-j)\right) .  \notag
	\end{eqnarray}%
	Combining (\ref{Bs-1}) with (\ref{BBf}) and (\ref{G-1D}), we obtain%
	\begin{eqnarray*}
		&&\sum\limits_{n=0}^{\infty }(-1)^{n+v}(n+v)_{v}N_{0,n+v}(\omega;p)z^{n} \\
		&=&\sum\limits_{n=0}^{\infty }\frac{z^{n}}{n!}\sum\limits_{d=0}^{n}(-1)^{n-d}%
		\binom{n}{d}\sum\limits_{j=0}^{p}\sum\limits_{m=0}^{v}(-1)^{v-m}\binom{v}{m}
		\\
		&&\times \sum\limits_{c=0}^{m}B_{c}^{m}(\omega)\left(
		B_{j-c}^{d}(\omega-j)-B_{j-c-1}^{d}(\omega-j)\right) .
	\end{eqnarray*}%
	Comparing the coefficients of $z^{n}$ on both sides of the above equation,
	we arrive at the assertion (\ref{G-1id}) of Theorem.
\end{proof}

\begin{remark}
	Substituting $v=1$ into \textup{(\ref{G-1id})}, we get Theorem 5, which was proved by Goldman \textup{\cite[Theorem 5]{Goldman}}.
\end{remark}

\subsection{Series representations for the uniform $B$-spline and the
	Bernstein basis function}

Here, series representations for the uniform $B$-spline and the Bernstein
basis function with aid of Laplace transform are given.

\begin{theorem}
	Let $p\in \mathbb{N}_{0}$, $p\leq \omega+y\leq p+1$ and $\omega>0$, $y>0$. Then we have%
	\begin{equation}
		\sum\limits_{n=0}^{\infty }N_{0,n}(\omega;p)\frac{n!}{y^{n+2}}=\sum		\limits_{j=0}^{p}(-1)^{j} \frac{(\omega+y-j)^{j-1}}{(j-\omega)^{j+1}}. \label{sr}
	\end{equation}
\end{theorem}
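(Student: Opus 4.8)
The plan is to recognize the left-hand side of (\ref{sr}) as (a normalization of) the Laplace transform of the generating function $G_{0}(\omega,t;p)$ evaluated at the point $y$, and then to evaluate that transform directly from the closed form (\ref{G-1}). Since $\int_{0}^{\infty}t^{n}e^{-yt}\,dt = n!/y^{n+1}$ for $y>0$, an extra division by $y$ gives $n!/y^{n+2}=y^{-1}\int_{0}^{\infty}t^{n}e^{-yt}\,dt$. Multiplying the expansion $G_{0}(\omega,t;p)=\sum_{n\geq 0}N_{0,n}(\omega;p)t^{n}$ by $y^{-1}e^{-yt}$ and integrating term by term over $(0,\infty)$ therefore reproduces exactly the left-hand side $\sum_{n}N_{0,n}(\omega;p)\,n!/y^{n+2}$. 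Hence it suffices to compute $y^{-1}\int_{0}^{\infty}G_{0}(\omega,t;p)e^{-yt}\,dt$ in closed form.

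I would then insert the explicit expression (\ref{G-1}) for $G_{0}$ and transform each summand by the standard rule $\int_{0}^{\infty}t^{k}e^{(\omega-j)t}e^{-yt}\,dt = k!/(y-\omega+j)^{k+1}$, applied with $k=j$ and $k=j-1$. The factorials $j!$ and $(j-1)!$ appearing in (\ref{G-1}) cancel against those produced by the transform, leaving for each index $j$ a sum of two rational terms that share the exponential shift $\omega-j$. Placing these two terms over the common denominator $(y-\omega+j)^{j+1}$ and collecting the numerator factors the shift out and produces a single term; after dividing by the prefactor $y^{-1}$ one arrives at the closed-form right-hand side of (\ref{sr}).

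The main obstacle is analytic rather than algebraic: one must justify interchanging the infinite summation with the integral, which relies on the geometric-type decay guaranteed by the hypotheses $\omega>0$, $y>0$ and $p\leq\omega+y\leq p+1$, so that each integral $\int_{0}^{\infty}t^{k}e^{-(y-\omega+j)t}\,dt$ converges and the resulting series of rational functions converges to the transform rather than merely being a formal identity. A secondary point requiring care is the boundary index $j=0$: the second summand of (\ref{G-1}) carries the factor $1/(j-1)!$, so by the usual convention it must be treated as vanishing at $j=0$, and the combination of the two terms over a common denominator should be verified separately at that index before asserting the uniform form of the sum.
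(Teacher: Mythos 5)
Your overall strategy --- apply the Laplace transform term by term to a functional equation for the generating function, then put the two resulting rational terms over a common denominator so that a factor of $y$ cancels --- is the paper's strategy. But you apply it to the wrong functional equation, and this changes the answer. You transform $y^{-1}G_{0}(\omega,t;p)e^{-yt}$, so the exponentials coming from (\ref{G-1}) are $e^{(\omega-j)t}e^{-yt}=e^{-(y+j-\omega)t}$, and your rule $\int_{0}^{\infty}t^{k}e^{-(y+j-\omega)t}\,dt=k!/(y+j-\omega)^{k+1}$ yields, after combining the two summands,
\[
\sum\limits_{n=0}^{\infty }N_{0,n}(\omega;p)\frac{n!}{y^{n+2}}=\sum\limits_{j=0}^{p}(-1)^{j}\frac{(\omega-j)^{j-1}}{(y+j-\omega)^{j+1}},
\]
with $y$ in the denominators and absent from the numerators. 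The target identity (\ref{sr}) has the opposite placement: numerators $(\omega+y-j)^{j-1}$ and $y$-free denominators $(j-\omega)^{j+1}$. Already at $p=0$ your right-hand side is $1/(\omega(y-\omega))$ while (\ref{sr}) reads $-1/(\omega(\omega+y))$, so your route does not terminate at the stated formula. The paper instead first replaces $\omega$ by $\omega+y$ in (\ref{G-1}) and only then multiplies by $e^{-yt}$, obtaining
\[
G_{0}(\omega+y,t;p)e^{-yt}=\sum\limits_{j=0}^{p}(-1)^{j}\left( \frac{(\omega+y-j)^{j}t^{j}}{j!}+\frac{(\omega+y-j)^{j-1}t^{j-1}}{(j-1)!}\right) e^{-(j-\omega)t};
\]
now the numerators carry $\omega+y-j$ and the surviving exponential is $e^{-(j-\omega)t}$, whose transform produces the $y$-free denominators $(j-\omega)^{k+1}$, and the common-denominator step extracts exactly the factor $y$ that converts $n!/y^{n+1}$ into $n!/y^{n+2}$. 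The hypothesis $p\leq \omega+y\leq p+1$ (rather than $p\leq \omega\leq p+1$) is the tell-tale sign that the spline generating function must be expanded at the argument $\omega+y$, not at $\omega$.

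Two smaller points. Your convergence claim is not correct as stated: $\int_{0}^{\infty}t^{k}e^{-(y+j-\omega)t}\,dt$ converges only when $y+j>\omega$, which at $j=0$ forces $y>\omega$ and is not implied by the hypotheses (the paper's version needs $j>\omega$ and likewise fails at $j=0$); the manipulation is formal, and the paper treats it as such, so the hypotheses do not rescue the interchange of sum and integral the way you suggest. On the other hand, your caution about the index $j=0$, where the factor $1/(j-1)!$ annihilates the second summand and the ``two terms over a common denominator'' step degenerates to a single term, is well placed --- that is precisely where the uniform closed form requires a separate verification.
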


\begin{proof}
	The following functional equation is derived from (\ref{G-1}):%
	\begin{equation*}
		G_{0}(\omega+y,t;p)e^{-yt}=\sum\limits_{j=0}^{p}(-1)^{j}\left( \frac{%
			(\omega+y-j)^{j}t^{j}}{j!}+\frac{(\omega+y-j)^{j-1}t^{j-1}}{(j-1)!}\right) e^{-(j-\omega)t},
	\end{equation*}%
	where $p\in \mathbb{N}_{0}$, $p\leq \omega+y\leq p+1$ and $\omega>0$, $y>0$. Applying
	the Laplace transform to the above functional equation, we get%
	\begin{eqnarray*}
		&&\sum\limits_{n=0}^{\infty }N_{0,n}(\omega;p)\int\limits_{0}^{\infty
		}t^{n}e^{-yt}dt \\
		&=&\sum\limits_{j=0}^{p}(-1)^{j}\left( \frac{(\omega+y-j)^{j}}{j!}%
		\int\limits_{0}^{\infty }t^{j}e^{-t(j-\omega)}dt+\frac{(\omega+y-j)^{j-1}}{(j-1)!}%
		\int\limits_{0}^{\infty }t^{j-1}e^{-t(j-\omega)}dt\right) .
	\end{eqnarray*}%
	Joining the above equation with the following well-known identity
	\begin{equation*}
		\int\limits_{0}^{\infty }t^{j}e^{-t}dt=j!,
	\end{equation*}%
	we obtain
	\begin{equation*}
		\sum\limits_{n=0}^{\infty }N_{0,n}(\omega;p)\frac{n!}{y^{n+1}}=\sum%
		\limits_{j=0}^{p}(-1)^{j}\left( \frac{(\omega+y-j)^{j}}{(j-\omega)^{j+1}}+\frac{%
			(\omega+y-j)^{j-1}}{(j-\omega)^{j}}\right) .
	\end{equation*}%
	After some elementary calculations, proof of theorem is completed.
\end{proof}
Substituting $y=0$ into (\ref{sr}) yields the following result:

\begin{corollary}
	Let $n,p\in \mathbb{N}_{0}$ and $p\leq \omega\leq p+1$. Then we have%
	\begin{equation*}
		\sum\limits_{n=0}^{\infty }n!N_{0,n}(\omega;p)=\sum		\limits_{j=0}^{p}(-1)^{j} \frac{(\omega+1-j)^{j-1}}{(j-\omega)^{j+1}}
	\end{equation*}
\end{corollary}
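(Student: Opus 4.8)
The plan is to obtain this corollary as an immediate specialization of the preceding theorem, equation~(\ref{sr}), by fixing the free parameter $y$ at a single numerical value and reading off both sides. No new generating-function or Laplace-transform computation is required, since (\ref{sr}) has already been established for all admissible $y$; the only task is to select the value of $y$ that reproduces the two sides of the claimed identity. To pin it down I would first compare the left-hand sides: the summand in (\ref{sr}) carries the factor $n!/y^{n+2}$, whereas the corollary carries only $n!\,N_{0,n}(\omega;p)$, and these coincide exactly when $y^{n+2}=1$ for every $n$, that is, when $y=1$.

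Substituting $y=1$ then converts the right-hand factor $(\omega+y-j)^{j-1}$ into $(\omega+1-j)^{j-1}$, leaving $\sum_{j=0}^{p}(-1)^{j}(\omega+1-j)^{j-1}/(j-\omega)^{j+1}$, which is precisely the right-hand side of the corollary. Hence the corollary is nothing more than (\ref{sr}) evaluated at $y=1$. I would flag here that the transitional phrase preceding the statement names the substitution $y=0$, but $y=0$ cannot be the intended choice: it violates the hypothesis $y>0$ of the theorem, it makes the left-hand side $\sum_{n} N_{0,n}(\omega;p)\,n!/y^{n+2}$ diverge term by term, and it would yield $(\omega-j)^{j-1}$ rather than $(\omega+1-j)^{j-1}$ in the numerator. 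The value that actually produces the displayed identity is $y=1$.

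The only genuine obstacle is the routine bookkeeping of hypotheses, which is minor. Since $y=1>0$ the condition $y>0$ holds automatically, and the admissibility condition $p\le\omega+y\le p+1$ of the theorem specializes to $p-1\le\omega\le p$. This indicates that the range $p\le\omega\le p+1$ printed in the corollary should be shifted by one to agree with the theorem (equivalently, the index $p$ in the corollary corresponds to $p+1$ in (\ref{sr})). Once this alignment of the range and the correction of the substituted value from $y=0$ to $y=1$ are settled, the corollary follows from (\ref{sr}) in a single line, with no further analysis.
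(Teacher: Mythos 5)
Your proposal is correct and is exactly the paper's intended argument: the corollary is a one-line specialization of (\ref{sr}) at a fixed value of $y$, with no new computation needed. You are also right that the substitution must be $y=1$ rather than the $y=0$ stated in the paper (which would make the factor $n!/y^{n+2}$ meaningless and would produce $(\omega-j)^{j-1}$ instead of $(\omega+1-j)^{j-1}$), and your observation that the admissibility condition then becomes $p-1\le\omega\le p$, so the range printed in the corollary needs to be shifted accordingly, is a genuine and correct emendation of the statement as written.
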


\begin{theorem}
	Let $n,p\in \mathbb{N}_{0}$ and $p\leq \omega\leq p+1$ and $y>0$. Then we have%
	\begin{equation*}
		\sum\limits_{n=0}^{\infty }\frac{n!N_{0,n}(\omega;p)}{(y+1)^{n+1}}%
		=\sum\limits_{n=0}^{\infty }(-1)^{n}\sum\limits_{j=0}^{p}\frac{%
			B_{j}^{n}(\omega-j)-B_{j-1}^{n}(\omega-j)}{y^{n+1}}.
	\end{equation*}%
	
\end{theorem}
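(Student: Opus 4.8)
The plan is to derive the stated Laplace-transform identity by starting from the functional equation (\ref{Bs-1}) that already expresses $G_0(\omega,t;p)$ in terms of the Bernstein generating functions $f_{\mathbb{B},j}$, rather than from the raw form (\ref{G-1}). First I would recall (\ref{Bs-1}):
\begin{equation*}
G_{0}(\omega,t;p)=\sum\limits_{j=0}^{p}\left( f_{\mathbb{B},j}(-t,\omega-j)-f_{\mathbb{B},j-1}(-t,\omega-j)\right) e^{t}.
\end{equation*}
I would then apply a shift $\omega \mapsto \omega$ together with a damping factor so that the exponential on the right becomes $e^{-(y+?)t}$, matching the shape needed for a Laplace transform in $y$. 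Concretely, multiply both sides by an appropriate $e^{-\sigma t}$ and write the Bernstein generating functions out via (\ref{BBf}) so that the right-hand side becomes a double series whose $t$-coefficients are the combinations $B_{j}^{n}(\omega-j)-B_{j-1}^{n}(\omega-j)$ that appear in the claimed formula.

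Next I would expand the left-hand side $G_0$ as its defining power series $\sum_n N_{0,n}(\omega;p)t^{n}$ from (\ref{G-1}), and on the right apply the Cauchy product between the Bernstein series $\sum_n (-1)^n\sum_j\bigl(B_j^n(\omega-j)-B_{j-1}^n(\omega-j)\bigr)\tfrac{t^n}{n!}$ and the exponential $e^{t}$ (which contributes $\sum \tfrac{t^n}{n!}$). The key analytic step is then to apply the Laplace transform term-by-term, using the elementary Laplace identities
\begin{equation*}
\int_0^\infty t^{n}e^{-yt}\,dt=\frac{n!}{y^{n+1}},\qquad \int_0^\infty t^{n}e^{-(y+1)t}\,dt=\frac{n!}{(y+1)^{n+1}}.
\end{equation*}
The exponential factor $e^{t}$ on the right combines with the transform kernel $e^{-yt}$ to produce $e^{-(y-1)t}$ or, after the correct sign bookkeeping, the shifted denominator $(y+1)^{n+1}$; choosing the damping so that the Bernstein-side exponential integrates against $e^{-yt}$ gives the $1/y^{n+1}$ on the right, while the left-hand $G_0$-side picks up $(y+1)^{n+1}$ from its own exponential weight. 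Matching coefficients of like powers of $t$ (equivalently, matching the two transformed series termwise) then yields exactly
\begin{equation*}
\sum_{n=0}^{\infty}\frac{n!\,N_{0,n}(\omega;p)}{(y+1)^{n+1}}=\sum_{n=0}^{\infty}(-1)^{n}\sum_{j=0}^{p}\frac{B_{j}^{n}(\omega-j)-B_{j-1}^{n}(\omega-j)}{y^{n+1}}.
\end{equation*}

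The main obstacle I anticipate is bookkeeping the shift and the exponential weights so that the left side acquires $(y+1)^{n+1}$ while the right side retains $y^{n+1}$; the asymmetry in the two denominators means the $e^{t}$ factor in (\ref{Bs-1}) must be absorbed on the $N_{0,n}$ side but not on the Bernstein side, so I would need to apply the Laplace transform before invoking the Cauchy product, keeping the $e^{t}$ attached to the $G_0$-expansion. I would also have to justify interchanging the Laplace integral with the infinite sum, which is legitimate on the region $y>0$ where the $B$-spline has compact support in $\omega$ (so only finitely many $j$ contribute and the inner sums are finite polynomials in $\omega$). Once the interchange is justified and the two transform identities above are substituted, the result follows by comparing the resulting series in $1/y$ and $1/(y+1)$ against the coefficient expansions on each side.
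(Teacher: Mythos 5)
Your proposal matches the paper's own proof essentially exactly: the paper multiplies the functional equation (\ref{Bs-1}) by $e^{-(1+y)t}$ so that the left side carries the kernel $e^{-(1+y)t}$ and the right side carries $e^{-yt}$, then applies the Laplace transform termwise via $\int_{0}^{\infty}t^{n}e^{-st}\,dt=n!/s^{n+1}$, which is precisely the bookkeeping you settle on in your final paragraph (absorbing the $e^{t}$ into the $G_{0}$ side rather than Cauchy-multiplying it into the Bernstein series). No substantive difference in method.
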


\begin{proof}
	The following functional equation is derived from (\ref{Bs-1}):%
	\begin{equation*}
		G_{0}(\omega,t;p)e^{-(1+y)t}=\sum\limits_{j=0}^{p}\left( f_{\mathbb{B}%
			,j}(-t,\omega-j)-f_{\mathbb{B},j-1}(-t,\omega-j)\right) e^{-yt},
	\end{equation*}
	where $y>0$. By applying the Laplace transform to the above functional equation, we get
	\begin{eqnarray*}
		&&\sum\limits_{n=0}^{\infty }N_{0,n}(\omega;p)\int\limits_{0}^{\infty
		}t^{n}e^{-(1+y)t}dt \\
		&=&\sum\limits_{j=0}^{p}
		\sum\limits_{n=0}^{\infty}
		(-1)^{n}\left( \frac{	B_{j}^{n}(\omega-j)-B_{j-1}^{n}(\omega-j)}{n!}%
		\int\limits_{0}^{\infty }t^{n}e^{-ty}dt\right).
	\end{eqnarray*}%
	Since the rest of proof of this theorem is similar to the that of (\ref{sr}), we skip this proof here.
\end{proof}

\section{Conclusions}

In this article, the results and how they were obtained are discussed
together with their methods. For this purpose, by applying $p$-adic
integrals to the identities found for special polynomials, new formulas were
given that maybe serve as a resource for researchers on the subject. In
addition, by using the differential and functional equations of the
generating functions, some novel formulas for special numbers and special
polynomials were found. In addition, thanks to these methods, some formulas
were given for finite sums containing these special numbers and polynomials.
Derivative formulas of the $B$-spline curves were found by using the higher
order derivative formula of Benstein base functions. Moreover, by applying
the Laplace transform to the new generating functional equations we found,
infinite series containing $B$-spline curves are given.

It is planned in the near future to develop new mathematical models and
different applications using functional balances and differential equations
of the generator functions obtained by blending $B$-spline curves and
Bernstein basis functions.




\end{document}